\newtheorem{theorem}{Theorem}[section]
\newcommand{\MakeTheoremAndCounter}[2]{\newaliascnt{#1}{theorem}
\newtheorem{#1}[#1]{#2}
\aliascntresetthe{#1}
\expandafter\providecommand\csname#1autorefname\endcsname{#2}}
\theoremstyle{definition}
\newtheorem*{remark}{Remark}
\newcommand{\pr}{\mathrm{pr}}
\newcommand{\supp}{\mathrm{supp}\,}
\newcommand{\dom}{\mathrm{dom}\,}
\newcommand{\graph}{\mathrm{graph}\,}
\newcommand{\intby}{~\mathrm{d}}
\newcommand{\titleofthepaper}{Low-complexity Haar null sets without $G_\delta$ hulls in $\mathbb{Z}^\omega$}
\begin{document}
\title{\titleofthepaper}

\author{Don\'at Nagy}
\address{E\"otv\"os Lor\'and University, Institute of Mathematics, P\'azm\'any P\'eter
s. 1/c, 1117 Budapest, Hungary}
\email{nagdon@bolyai.elte.hu}

\thanks{The author was supported by the National Research, Development and Innovation Office -- NKFIH, grant no.~104178.\\
\includegraphics[height=1cm]{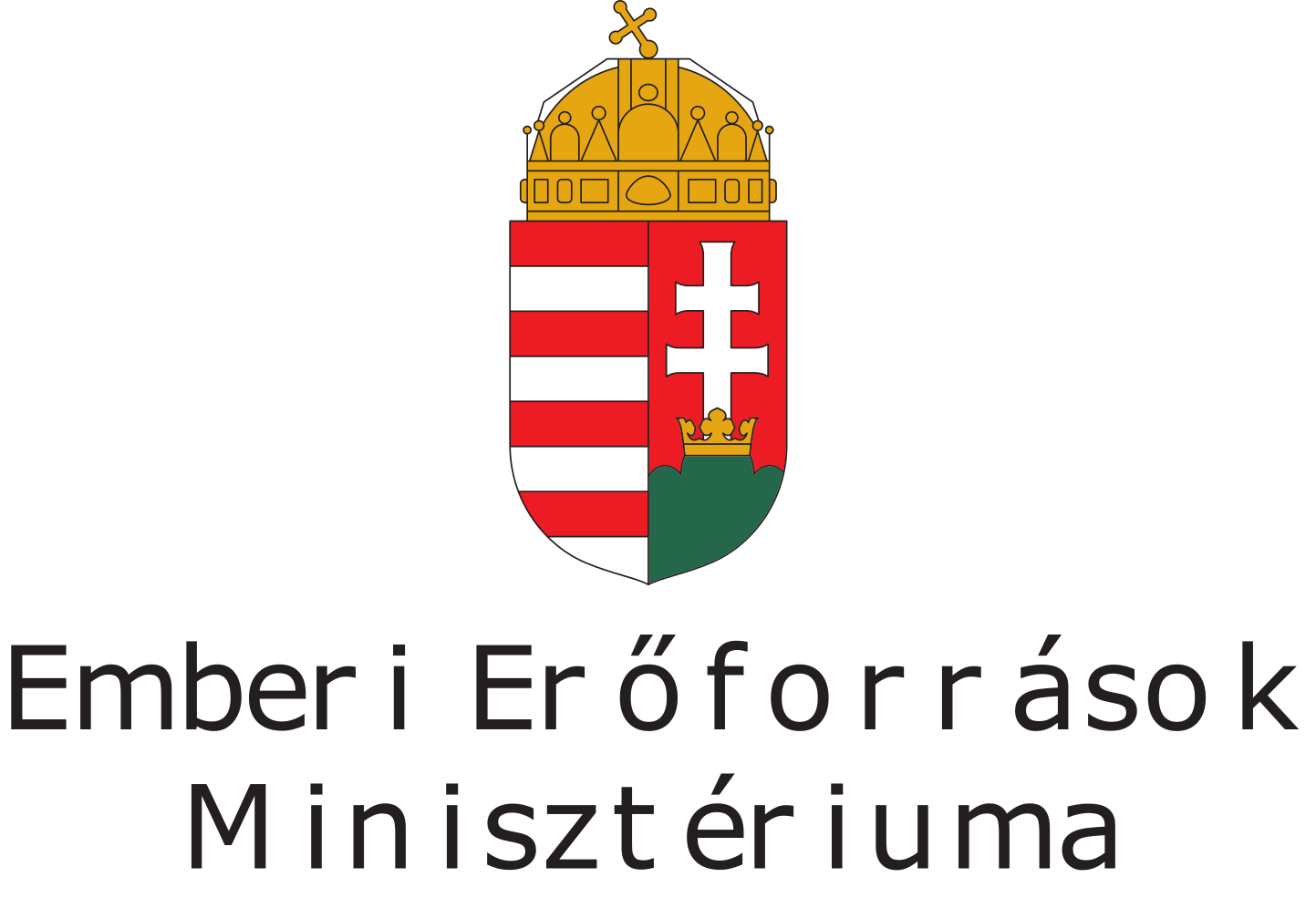} \raisebox{0.5cm}{\parbox[c]{11cm}{\scshape Supported by the \'UNKP-17-3 New National Excellence Program of the Ministry of Human Capacities.}}}
\subjclass[2010]{Primary 03E15; Secondary 28C10, 22F99}
\begin{abstract}
We show that for every $2\le \xi<\omega_1$ there exists a Haar null set in $\mathbb{Z}^\omega$ that is the difference of two $\mathbf{\Pi}^0_\xi$ sets but not contained in any $\mathbf{\Pi}^0_\xi$ Haar null set. In particular, there exists a Haar null set in $\mathbb{Z}^\omega$ that is the difference of two $G_\delta$ sets but not contained in any $G_\delta$ Haar null set. This partially answers a question of M. Elekes and Z. Vidny\'anszky. To prove this, we also prove a theorem which characterizes the Haar null subsets of $\mathbb{Z}^\omega$.
\end{abstract}

\maketitle

\tableofcontents

\section{Introduction}

It is well known that if $G$ is a locally compact (Hausdorff) group, then there exists a left Haar measure on $G$, that is, a regular left invariant Borel measure that is finite for compact sets and positive for non-empty open sets (see e.g.\ \cite[\S15]{HR}). This measure is unique up to a multiplicative constant and is a vital tool in studying locally compact groups. Unfortunately, it can be proved that groups that are not locally compact do not admit a measure with these useful properties. However, the notion of a set of Haar measure zero has a well-behaved generalisation that works in Polish (i.e.\ separable and completely metrizable), not necessarily locally compact groups. This notion of a \emph{Haar null set} was introduced by Christensen in \cite{Chr}.

We will use the following definition of Haar null sets (this is the most common definition in recent papers and differs slightly from the original definition of Christensen):

\begin{definition}
If $G$ is a Polish group, a set $A\subseteq G$ is called \emph{Haar null} if there exist a Borel set $B\supseteq A$ and a Borel probability measure $\mu$ on $G$ such that $\mu(gBh) = 0$ for every $g, h\in G$. A measure $\mu$ satisfying this is called a \emph{witness measure} (for the set $A$).
\end{definition}

\begin{fact}
Haar null sets form a translation-invariant $\sigma$-ideal.
\end{fact}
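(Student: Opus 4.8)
The plan is to verify the three defining properties separately, namely closure under subsets, invariance under two-sided translation, and closure under countable unions; the last of these is where essentially all the work lies.

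Closure under subsets is immediate: if $A$ is Haar null, witnessed by $(\mu, B)$ with $A \subseteq B$ Borel and $\mu(gBh)=0$ for all $g,h$, then any $A' \subseteq A$ is also contained in $B$, so the same pair $(\mu, B)$ witnesses that $A'$ is Haar null. Two-sided translation invariance is almost as quick, and this is precisely where the two-sided form of the definition pays off: for $a, b \in G$ the set $aBb$ is Borel and contains $aAb$, and for every $g, h \in G$ one has $g(aBb)h = (ga)B(bh)$, so $\mu\big(g(aBb)h\big) = \mu\big((ga)B(bh)\big) = 0$. Thus the original witness measure $\mu$ also witnesses $aAb$, and $aAb$ is Haar null.

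For countable unions the key observation is that convolving a witness measure by an arbitrary Borel probability measure, on either side, preserves the witness property. Indeed, if $\mu(gBh)=0$ for all $g,h$ and $\lambda, \rho$ are Borel probability measures, then writing each convolution as a pushforward of a product measure and applying Fubini gives $(\lambda * \mu)(gBh) = \int \mu\big((x^{-1}g)Bh\big)\, d\lambda(x) = 0$ and $(\mu * \rho)(gBh) = \int \mu\big(gB(hy^{-1})\big)\, d\rho(y) = 0$. Now suppose each $A_n$ is Haar null, witnessed by $(\mu_n, B_n)$, and put $B = \bigcup_n B_n$, a Borel set containing $\bigcup_n A_n$. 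Since $gBh = \bigcup_n gB_n h$, countable subadditivity reduces the problem to producing a single Borel probability measure $\mu$ with $\mu(gB_n h) = 0$ for every $n$ and all $g, h$. I would obtain such a $\mu$ as an infinite convolution $\mu = \tilde\nu_0 * \tilde\nu_1 * \cdots$ of suitably normalised translates of the $\mu_n$: for each fixed $n$ this factors as $\mu = \lambda_n * \tilde\nu_n * \rho_n$ with $\lambda_n, \rho_n$ Borel probability measures, so the convolution observation, applied to the witness $\tilde\nu_n$ of $B_n$, shows $\mu$ witnesses $B_n$.

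The main obstacle is making sense of this infinite convolution, i.e.\ the convergence of the partial products. To this end I would first replace each $\mu_n$ by a compactly supported witness: by tightness choose a compact set $K_n$ with $\mu_n(K_n) > 0$, and after covering $K_n$ by finitely many small balls (using separability) shrink $K_n$ so that, in addition, $\diam K_n \le 2^{-n}$ with respect to a fixed compatible left-invariant metric; the normalised restriction $\nu_n = \mu_n(\,\cdot\, \cap K_n)/\mu_n(K_n)$ is dominated by a multiple of $\mu_n$, hence is still a witness for $B_n$. Replacing $\nu_n$ by a translate $\tilde\nu_n = \delta_{c_n} * \nu_n$ for an appropriate $c_n$ moves the support into the $2^{-n}$-ball around the identity, which keeps each factor within distance $2^{-n}$ of the identity and forces the partial products to be Cauchy. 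In the abelian setting relevant to this paper (and in particular in $\mathbb{Z}^\omega$, where one may fix a complete invariant metric) this Cauchy sequence converges and the limiting measure $\mu$ is the desired common witness; this convergence step is the only genuinely delicate point, since for general non-abelian Polish groups the incompleteness of left-invariant metrics means the infinite product needs additional care. Finally, I would note that the ideal is proper: if $G$ itself were Haar null its Borel hull would be all of $G$, whence $\mu(G) = 1 \neq 0$, a contradiction.
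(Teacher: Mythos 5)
Your proposal is essentially correct for the setting the paper actually works in, but note first that the paper gives no proof of this Fact at all: it is stated with references, namely Christensen \cite{Chr} for abelian Polish groups (the only case used in the paper) and \cite{Myc}, \cite{CK} for general Polish groups. What you have written is in substance Christensen's original argument: closure under subsets and two-sided translation are immediate from the definition; for countable unions, shrink each witness measure to a normalised restriction with compact support of diameter at most $2^{-n}$, translate it to sit near the identity, observe that convolving a witness on either side by an arbitrary Borel probability measure preserves the witness property (your Fubini computation is correct), and take the infinite convolution as a single common witness, after which countable subadditivity finishes the job. All of these steps are sound, and in the abelian case the convergence of the partial convolutions does go through: an abelian Polish group carries a complete two-sided invariant metric, so the partial products form a Cauchy sequence in the Prokhorov metric, whose completeness yields the limit, and weak continuity of convolution gives the factorization $\mu = \lambda_n * \tilde\nu_n * \rho_n$ you need. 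The one caveat---which you flag yourself, to your credit---is that the Fact as stated concerns arbitrary Polish groups, and there your scheme genuinely breaks down: compatible left-invariant metrics need not be complete, and the interaction of the two-sided witness condition with only one-sided invariance prevents the infinite-convolution argument from being pushed through as is. This is exactly why the paper defers the general case to \cite{Myc} and \cite{CK}, whose proofs use different ideas. So your proposal proves the statement in the abelian groups relevant to this paper (matching the scope of the Christensen citation the paper relies on), but it is not a complete proof of the Fact in full generality.
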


The difficult part of this result is that Haar null sets are closed under countable unions. In this paper we only work in abelian Polish groups, where this was proved by Christensen in \cite[Theorem 1]{Chr}; for a proof in the general case see \cite{Myc} and \cite{CK}. As we should expect, in a locally compact Polish group the Haar null sets and the sets of Haar measure zero coincide.

This notion of smallness is widely used in various areas of mathematics; for a recent survey about the properties and applications of Haar null sets see e.g.\ \cite{EN}.

It is well known that the Haar measures are regular, i.e.\ if $G$ is a locally compact Polish group, $\mu$ is a left or right Haar measure on $G$ and $A \subseteq G$ is $\mu$-measurable, then
\[\mu(A) = \inf \{\mu(U) : A \subseteq U, U\text{ is open}\}.\]
(A proof of this can be found in \cite[15.8]{HR}.) This immediately implies that if $A\subseteq G$ is $\mu$-measurable, then there exists a $G_\delta$ set $A'\supseteq A$ such that $\mu(A') = \mu(A)$, in particular if $A$ is a set of Haar measure zero, then it is contained in a $G_\delta$ set of Haar measure zero.

This naturally inspires the question \cite[$P_1$]{Myc}:
\begin{question}[Mycielski]
Suppose that $G$ is a Polish group and $Y\subset G$ is Haar null. Does there exist a $G_\delta$ Haar null set including $Y$?
\end{question}

In the case of abelian Polish groups this was answered by \cite[Theorem 1.3]{EVGd}:
\begin{theorem}[Elekes-Vidny\'anszky]\label{the:ev}
If $G$ is a non-locally-compact abelian Polish group then there exists a Borel Haar null set $B\subset G$ that cannot be covered by a $G_\delta$ Haar null set.
\end{theorem}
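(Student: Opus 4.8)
The plan is to produce $B$ by exploiting the one structural feature that non-local compactness guarantees, namely an abundance of ``independent directions at every scale,'' and then to arrange $B$ so that it is measure-theoretically thin (Haar null) yet topologically so spread out that no countable intersection of open sets can strip away all of the witness mass of a hull. Since $G$ is abelian and Polish it carries a complete, two-sided invariant compatible metric $d$ (invariant-metrizable Polish groups are Raikov complete in that metric). As $G$ is not locally compact, no closed ball $\bar B(0,\varepsilon)$ is compact, so by completeness none is totally bounded. Recursively I would extract, for each level $k$, a finite but arbitrarily large $\delta_k$-separated set $A_k=\{a^k_0,\dots,a^k_{m_k-1}\}$ inside a ball of radius $\varepsilon_k$, choosing $\varepsilon_{k+1}\ll\delta_k$ so the tails $\sum_{j>k}\varepsilon_j$ are negligible compared with the level-$k$ separation. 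Invariance of $d$ then makes the coding map $\Phi(f)=\sum_k a^k_{f(k)}$, defined on $\prod_k\{0,\dots,m_k-1\}$, a continuous injection with compact image $C$; the separated blocks act as independent coordinates. (If $G$ has small torsion one takes the $a^k_i$ to be successive multiples of a separated element, or genuinely distinct elements, as available; the construction is insensitive to this.)

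Next I would define the candidate set as the points coded by sequences satisfying an ``infinitely often'' condition across coordinates, schematically $B=\Phi(\{f:\exists^\infty k\ f(k)\in S_k\})$ for a prescribed system of index sets $S_k$, so that $B$ is naturally a $G_\delta$ (or a difference of two $G_\delta$) subset of $C$. Pushing a product measure forward by $\Phi$ yields a Borel probability measure $\mu$, and I would tune the $S_k$ and the cardinalities $m_k$ so that a Borel--Cantelli computation forces $\mu(B+g)=0$ for every $g\in G$ (only $g\in C-C$ need be examined, since otherwise $B+g$ misses $\mathrm{supp}\,\mu$); this exhibits a witness, so $B$ is Haar null. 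The delicate balance already here is to keep $B$ thin enough to be Haar null while keeping it rich in every finite block of directions for the next step; I do not expect $B$ to be $\sigma$-compact, since one should suspect that $\sigma$-compact Haar null sets do admit $G_\delta$ hulls, which is exactly why the minimal complexity is a difference of two $G_\delta$.

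The crux, and the step I expect to be the main obstacle, is showing that no $G_\delta$ set $H=\bigcap_n U_n\supseteq B$ is Haar null. Suppose toward a contradiction that $\nu$ witnesses $H$, so $\nu(H+g)=0$ for all $g$. Using tightness of $\nu$ I would first reduce to a compactly supported, essentially product-like witness, and then run a Kuratowski--Ulam/Fubini argument across the infinitely many independent coordinates furnished by the blocks $A_k$: because $B$ meets almost every fiber of almost every finite block (the richness built into the $S_k$), each open $U_n$ must capture a $\mu$-substantial translated sub-grid in cofinitely many directions. A single $G_\delta$ hull imposes only the countably many constraints $U_n$, whereas annihilating $\nu$ in every translate would require defeating infinitely many independent directions at once; a compactness-and-semicontinuity limit over the decreasing finite intersections $U_1\cap\dots\cap U_n$ should then produce a translate $g$ with $\nu(H+g)>0$, the desired contradiction.

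The real work lives in making this propagation quantitative against an \emph{arbitrary} hull-witness $\nu$ (one not adapted to the grid), and in pushing a lower bound $\nu(U_n+g_n)\ge c$ through the limit despite the fact that $g\mapsto\nu(U+g)$ is only lower semicontinuous for open $U$, so that the naive extraction of a limiting translate points the wrong way. This is precisely the point at which a clean characterisation of the Haar null subsets in terms of grid/product measures---of the kind the present paper isolates for $\mathbb{Z}^\omega$---would do the decisive work, converting the ``finitely many constraints cannot beat infinitely many independent directions'' heuristic into a genuine argument; accordingly I would aim to prove and invoke such a characterisation rather than proceed by bare hands, and I would organise the general case by reducing it to a master case (such as $\mathbb{Z}^\omega$ or the abstract grid $C$ above) wherever a continuous homomorphic image or embedding makes this possible.
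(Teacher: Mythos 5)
There is a genuine gap, and in fact the concrete candidate set you propose cannot work for a reason internal to your own construction. Your set $B=\Phi(\{f:\exists^\infty k\ f(k)\in S_k\})$ is a relative $G_\delta$ subset of the compact (hence closed) set $C=\Phi\bigl(\prod_k\{0,\dots,m_k-1\}\bigr)$, and a relative $G_\delta$ subset of a closed subset of a Polish group is $G_\delta$ in the whole group. So if your Borel--Cantelli computation succeeds in making $B$ Haar null, then $B$ is itself a $G_\delta$ Haar null set containing $B$ --- it is its own hull, and the construction is self-defeating. This is not a repairable detail: \emph{any} set with the desired property must fail to be $G_\delta$, which is exactly why the optimal complexity is a difference of two $G_\delta$ sets, realized in the paper as the graph of a partial function (a $\mathbf{\Pi}^0_\xi$ set relative to the product of its $\mathbf{\Sigma}^0_\xi$ domain with the target space). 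No set defined by a fixed, translation-symmetric combinatorial condition like ``infinitely often'' has a chance here.

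The second, deeper gap is the one you partly acknowledge: your argument that no $G_\delta$ hull of $B$ is Haar null is only a heuristic (``finitely many constraints cannot beat infinitely many independent directions''), and as you note the lower semicontinuity of $g\mapsto\nu(U+g)$ points the wrong way, so the limiting step has no proof. What is missing is the actual mechanism, which is a \emph{diagonalization against all candidate hulls via a universal set}. In the paper's proof (of the stronger statement in $\mathbb{Z}^\omega$; note the paper does not reprove the general Elekes--Vidny\'anszky theorem, it cites \cite{EVGd}), one fixes a $\mathbf{\Sigma}^0_\xi$ set $U$ universal for $\mathbf{\Sigma}^0_\xi$ subsets of $2^\omega\times\mathbb{Z}^\omega$, intersects its diagonal section with $\supp\mu_a$, and applies Holick\'y's large-section uniformization \cite{Ho} to produce a partial function $f$ whose graph has the key property of \autoref{thm:thickgraph}: for every $a$, any $\mathbf{\Pi}^0_\xi$ set $S\supseteq\graph(f_a)$ has some section $S_x$ of positive $\mu_a$-measure --- because otherwise the complement of $S$ is coded as some section $U_{x^*,x^*}$ of full measure, forcing $(a,x^*)\in\dom(f)$ and $f(a,x^*)$ to lie simultaneously in $S_{x^*}$ and its complement. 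Your grid of separated elements and product witness measures correctly anticipate the characterization of witnesses (\autoref{lem:borelsubsets} here, playing the role you assign to a ``grid/product-measure characterisation''), and that characterization is indeed indispensable --- but it only tells you which measures you must defeat; without the universal-set diagonalization there is no way to defeat all of them with a single Borel set. Finally, the reduction of an arbitrary non-locally-compact abelian Polish group to a ``master case'' such as $\mathbb{Z}^\omega$ is not available in general (there need be no suitable embedding or quotient), which is why \cite{EVGd} runs the diagonalization directly on the separated-grid structure inside the given group.
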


As they used methods which construct Borel sets without giving an upper bound on the Borel class, they left the following questions open:
\begin{question}[Elekes-Vidny\'anszky]\label{que:ev1}
Let $G$ be a non-locally-compact abelian Polish group. Does there exist an $F_\sigma$ Haar null set that cannot be covered by a $G_\delta$ Haar null set?
\end{question}

Moreover, referring to their example of a set that cannot be covered by a $G_\delta$ Haar null set they asked the following:

\begin{question}[Elekes-Vidny\'anszky]\label{que:ev2}
What is the least complexity of such a set? And in general, what is the
least complexity of a Haar null set that cannot be covered by a $\mathbf{\Pi}^0_\xi$ Haar null set?
\end{question}

We partially answer this second question by proving the following result:

\newcommand{\thmmain}[2]{
\begin{theorem}#1
In the (non-locally-compact abelian Polish) group $\mathbb{Z}^\omega$ for every $2\le \xi<\omega_1$ there exists a Haar null set#2 that is the difference of two $\mathbf{\Pi}^0_\xi$ sets but is not contained in any $\mathbf{\Pi}^0_\xi$ Haar null set.
\end{theorem}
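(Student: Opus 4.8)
The plan is to reduce Haar nullity in $\mathbb{Z}^\omega$ to a criterion about \emph{cube measures} and then to build $A$ so that its own nullity is witnessed by one explicit such measure, while the complexity gap between $A$ and $\mathbf{\Pi}^0_\xi$ forces every $\mathbf{\Pi}^0_\xi$ superset of $A$ to be non-null. Write $C_f=\prod_n\{0,\dots,f(n)\}$ for $f\in\omega^\omega$ and let $\mu_f$ be the uniform product probability measure on $C_f$. I would first prove the characterization promised in the abstract in the usable form: $N\subseteq\mathbb{Z}^\omega$ is Haar null if and only if some $\mu_f$ witnesses this, i.e.\ $\mu_f(N+x)=0$ for all $x$; the content is that compactly supported product measures suffice, which I would get from Prokhorov together with a coordinatewise averaging of an arbitrary witness. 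This has a crucial consequence for the whole argument: since $A\subseteq N$ forces $\mu_f(A+x)\le\mu_f(N+x)$, \emph{every witness cube of $N$ is a witness cube of $A$}. Hence $N$ is non-null as soon as no witness cube of $A$ witnesses $N$, so to show that no $\mathbf{\Pi}^0_\xi$ Haar null $N$ contains $A$ it suffices to arrange that for every cube $f$ witnessing the nullity of $A$, and every $\mathbf{\Pi}^0_\xi$ set $N\supseteq A$, there is still some $x$ with $\mu_f(N+x)>0$; a cube that is \emph{not} a witness of $A$ already fails to witness any such $N$ for free.

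The delicate regime — and the reason a naive attempt fails — is that $A$ must be Haar null without lying in any low-complexity null set; in particular, writing $A=P\setminus Q$ I cannot make $P$ null (then $P$ is itself a $\mathbf{\Pi}^0_\xi$ null hull) nor deduce nullity of $A$ from a null superset, so the nullity of $A$ must be checked \emph{directly}, against a non-null frame $P$. I would use independent blocks: partition $\omega=\bigsqcup_k I_k$ into infinite pieces, so $\mathbb{Z}^\omega\cong\prod_k\mathbb{Z}^{I_k}$, put in block $k$ a thin finite cube and a clopen ``pattern'' condition, and let $P$ be the $\mathbf{\Pi}^0_\xi$ set of points realising the pattern along a $\mathbf{\Pi}^0_\xi$-prescribed set of blocks — a set I can keep non-null, even comeager. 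Into this I graft a genuine $\mathbf{\Sigma}^0_\xi$ ``liveness'' factor, built from a $\mathbf{\Sigma}^0_\xi$-complete set read off the blocks, whose negation is the $\mathbf{\Pi}^0_\xi$ set $Q$; thus $A=P\cap Q^c=P\setminus Q$ lies in $D_2(\mathbf{\Pi}^0_\xi)$ and, by completeness of the grafted set, genuinely above $\mathbf{\Pi}^0_\xi$. Although $P$ is non-null, the liveness factor is so sparse in the measure sense that a single fast-growing cube $f^*$ gives $\mu_{f^*}(A+x)=0$ for all $x$: in each block the $\mu_{f^*}$-fraction occupied by $A$ is bounded by the fraction occupied by the thin per-block cube, and with $f^*$ growing quickly these fractions multiply to $0$. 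This is the explicit nullity witness.

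For non-coverability I would take an arbitrary $\mathbf{\Pi}^0_\xi$ set $N\supseteq A$ and, by the reduction above, only need to defeat the witness cubes of $A$, foremost $f^*$. The point is that ``$A$ is $\mu_{f^*}$-null in every translate'' is a statement about the infinitely many regions $C_{f^*}+x$ spread across the blocks, and the gap $A\in D_2(\mathbf{\Pi}^0_\xi)\setminus\mathbf{\Pi}^0_\xi$ prevents a $\mathbf{\Pi}^0_\xi$ hull from staying $\mu_{f^*}$-null in all of them at once. Concretely, since the block-traces of $A$ reduce a $\mathbf{\Sigma}^0_\xi$-complete set, a $\mathbf{\Pi}^0_\xi$ hull $N$ cannot separate these traces from the corresponding disjoint $\mathbf{\Sigma}^0_\xi$ sets block-by-block without overflowing; summing the forced overflow over infinitely many independent blocks yields, for a suitable translate $x$, a non-vanishing product of block-fractions, i.e.\ $\mu_{f^*}(N+x)>0$. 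Thus $f^*$ (and, by the same scheme, every witness cube of $A$) fails to witness $N$, so $N$ is non-null. I stress that infinitely many blocks are essential: inside a single cube the ordinary regularity of Borel measures would hand us a $\mathbf{\Pi}^0_2\subseteq\mathbf{\Pi}^0_\xi$ null hull of $A$, so the obstruction is intrinsically a non-local-compactness phenomenon, in the spirit of Theorem~\ref{the:ev}.

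The main obstacle I anticipate is exactly this forcing step: converting ``a $\mathbf{\Pi}^0_\xi$ hull of a $\mathbf{\Sigma}^0_\xi$-complete block-trace must overflow'' into the quantitative statement ``$\mu_{f^*}(N+x)>0$ for some $x$'', uniformly over all witness cubes of $A$ and uniformly in $\xi$, while the nullity side of the construction pins $A$ down with the single cube $f^*$. Reconciling the two — one cube must make $A$ null, yet no cube may be allowed to witness the nullity of any $\mathbf{\Pi}^0_\xi$ hull — requires a careful quantitative choice of the block cubes, their growth rates, and the grafted $\mathbf{\Sigma}^0_\xi$-complete coding, so that the overflow guaranteed by the complexity gap keeps its block-fractions bounded away from $0$ simultaneously for the relevant translate. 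This bookkeeping, resting on the characterization theorem, is where the bulk of the work will lie.
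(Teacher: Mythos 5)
Your first step --- the characterization of Haar null subsets of $\mathbb{Z}^\omega$ by ``cube'' product measures $\mu_f$ --- is exactly the paper's \autoref{lem:borelsubsets} (the paper proves it by convolving a compactly supported witness with a large cube measure and then restricting and normalizing, rather than by Prokhorov plus averaging, but this part of your plan is sound), and your reduction that a hull $N\supseteq A$ can only be null via a cube that already witnesses $A$ is correct. The genuine gap is in the non-coverability step. You claim that the complexity gap $A\in D_2(\mathbf{\Pi}^0_\xi)\setminus\mathbf{\Pi}^0_\xi$ ``prevents a $\mathbf{\Pi}^0_\xi$ hull from staying $\mu_{f^*}$-null'' and that a hull ``cannot separate the block-traces \ldots{} without overflowing.'' But topological complexity of $A$, by itself, carries no measure-theoretic consequence for hulls: a hull only has to cover $A$, not compute it, and non-separability arguments yield at best that $N$ must meet certain $\mathbf{\Sigma}^0_\xi$ sets --- meeting them in even a single point, which gives no lower bound on $\mu_{f^*}(N+x)$. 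Indeed your own observation is the counterexample to the heuristic: a properly-$D_2(\mathbf{\Pi}^0_\xi)$ subset of a single compact cube has the same complexity gap yet admits a closed Haar null hull; spreading the construction over infinitely many blocks removes that particular hull but supplies no mechanism forcing \emph{every} $\mathbf{\Pi}^0_\xi$ hull to acquire a translate of positive $\mu_f$-measure. Moreover, even granting a per-block overflow of positive measure, your product of block-fractions can still vanish unless the fractions are bounded away from $0$ uniformly, and nothing in the construction provides that uniformity; and the scheme must work not just for $f^*$ but for \emph{every} witness cube of $A$, a family you have no control over.

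What actually fills this gap in the paper is a diagonalization against all $\mathbf{\Pi}^0_\xi$ sets, carried out separately for each cube measure: using a universal $\mathbf{\Sigma}^0_\xi$ set and Holick\'y's large-section uniformization (\autoref{thm:thickgraph}), one builds for every $a\in\mathbb{Z}_+^\omega$ a partial function $f_a:2^\omega\rightarrowtail\mathbb{Z}^\omega$ with $\graph(f_a)\subseteq 2^\omega\times\supp\mu_a$ such that \emph{any} $\mathbf{\Pi}^0_\xi$ set containing $\graph(f_a)$ has some section of positive $\mu_a$-measure --- this is the quantitative ``overflow'' your proposal needs but does not produce, and it is obtained by universality (the would-be null hull's complement appears as a section of the universal set, and the selector is forced to land in it), not by a complexity gap. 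The second idea you are missing is how to defeat all cubes simultaneously while keeping $A$ null: the paper encodes the parameter $(a,x)$ into the point itself via the coordinatewise map $t$, so that $E=t(\graph f)$ contains a translated copy of each thick graph inside $\supp\mu_a$, while distinct points of $E$ differ by at least $2$ in some coordinate, making the coin-flip measure $\mu_{(1,1,\dots)}$ witness nullity of the whole union; pulling a hull $H$ back along the translation $g\mapsto t(a,x,g)$ then converts the positive-measure section into a positive-measure translate of $H$, contradicting the witness sequence from \autoref{lem:borelsubsets}. Without a universal-set diagonalization and a selection theorem of this kind, the ``forced overflow'' step of your proposal has no proof, and I see no way to repair it within your block-coding framework alone.
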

}\thmmain{}{}

\begin{remark} Studying this set of problems is also motivated by the well-known question \cite{DaOP} of Darji asking whether every uncountable Polish group can be written as a union of a meager and a Haar null set (which turns out to be equivalent to a strong variant of our problem), and also motivated by the similar result \cite{Ba} of Banakh in the non-abelian case. These connections are described in \autoref{sec:last}.
\end{remark}

\section{Preliminaries}

As usual, $\mathbb{N}$ and $\omega$ will both denote the set of nonnegative integers. We will use \enquote{$\mathbb{N}$} when we use this set as a topological space (with the discrete topology) and use \enquote{$\omega$} when we use it as an ordinal or index set. We use the notation $\llbracket m, n\rrbracket=[m,n]\cap \mathbb{Z}$ for sets consisting of consecutive integers, and as usual, $\mathbb{Z}_+$ denotes the set of positive integers. For $n\in\omega$ let $\pr_n$ be the canonical projection $\pr_n : \mathbb{Z}^\omega \to \mathbb{Z}, a \mapsto a(n)$.

We will use some notation related to sequences (i.e.\ functions $s$ whose domain is either a natural number or $\omega$). Let $S$ be an arbitrary set. Let $S^{<\omega}=\bigcup_{n\in\omega}S^n$ be the set of finite sequences of elements of $S$. For $s\in S^{<\omega}$, $|s|$ denotes the length of $s$. For a sequence $s\in S^{<\omega}$, let $[s]\subseteq S^{\omega}$ be the set of sequences which have $s$ as an initial segment, i.e.
\[[s]=\{x\in S^\omega : x{\upharpoonright_{\dom(s)}}=s\}.\]



As usual, $\mathcal{B}(X)$ denotes the Borel sets of a space $X$.

\section{Haar null sets in \texorpdfstring{$\mathbb{Z}^\omega$}{Z to the omega}}
Let $\varrho_k$ be the uniform probability measure on $\llbracket 0, k\rrbracket$, that is, the (Borel probability) measure on $\mathbb{Z}$ defined by $\varrho_k(X)=\frac{|X\cap \llbracket 0, k\rrbracket|}{k+1}$.

If $(a(n))_{n\in\omega}$ is a sequence of positive integers, then let $\mu_a$ be the Borel probability measure on $\mathbb{Z}^\omega$ defined as the product $\bigotimes_{n\in\omega} \varrho_{a(n)}$. Clearly $\supp\mu_a = \prod_{n\in\omega} \llbracket 0, a(n)\rrbracket$.

The following result characterizes the Borel Haar null subsets of $\mathbb{Z}^\omega$:
\begin{theorem}\label{lem:borelsubsets}
A Borel subset $B \subseteq \mathbb{Z}^\omega$ is Haar null if and only if there exists a sequence of positive integers $(a(n))_{n\in\omega}$ such that $\mu_a(B+x)=0$ for every $x\in \mathbb{Z}^\omega$.
\end{theorem}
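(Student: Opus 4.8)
The $(\Leftarrow)$ implication is immediate: a measure $\mu_a$ with $\mu_a(B+x)=0$ for all $x$ is by definition a witness measure for $B$ — in the abelian group $\mathbb{Z}^\omega$ the condition $\mu(gBh)=0$ collapses to $\mu(B+x)=0$ — so $B$ is Haar null. All of the content lies in the converse, so I would concentrate there. To produce a witness of the special product form from an \emph{arbitrary} witness $\mu$, I would first normalise $\mu$: by inner regularity I may restrict $\mu$ to a compact set of positive measure and rescale (the value on any translate of $B$ only decreases, so this is still a witness), and since every compact subset of $\mathbb{Z}^\omega$ lies in some product $\prod_n\llbracket 0,d_n\rrbracket$, after a translation (which preserves the witness property, as all translates of $B$ are quantified over) I may assume $\supp\mu\subseteq\prod_n\llbracket 0,d_n\rrbracket$.

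The one clean engine I am confident about is that \emph{being a witness is preserved under convolution}: for any Borel probability measure $\kappa$, $(\mu*\kappa)(B+x)=\int\mu(B+x-y)\,d\kappa(y)=0$ for every $x$. Applying this with $\kappa=\mu_a$ and unravelling the convolution the other way yields, for every $x$, the identity $\int\mu_a(B+x-y)\,d\mu(y)=(\mu*\mu_a)(B+x)=0$. Writing $g(x):=\mu_a(B+x)\ge 0$, this reads $\int g(x-y)\,d\mu(y)=0$ for all $x$, so $g(x-y)=0$ for $\mu$-almost every $y$; equivalently, for every $x$ the set $\{y:x-y\in P\}=x-P$ is $\mu$-null, where $P:=\{x:\mu_a(B+x)>0\}$ is Borel (Fubini). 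Hence $\mu$ witnesses that $-P$ is Haar null, so $P$ itself is Haar null (Haar nullity is invariant under reflection). Thus for \emph{any} sufficiently spread-out $a$ the set of ``bad'' translates is already a Haar null set; the real task is to choose $a$ so that this set is genuinely empty.

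This is exactly where the main obstacle sits. For a careless $a$ the set $P$ need not be empty: a positive-$\mu_a$-measure slice of $B+x$ can be pushed out of $\supp\mu_a$ by changing a single high coordinate of $x$, so $g$ is \emph{not} continuous on $\mathbb{Z}^\omega$ and $P$ is merely Haar null rather than open — and a nonempty Haar null $P$ is entirely consistent with everything proved so far. Therefore $a$ must be built by hand. The route I would take exploits a \emph{finite-dimensional} domination: on any initial block of coordinates $n<N$ an arbitrary probability measure is dominated by $\prod_{n<N}(d_n+1)$ times the uniform measure $\bigotimes_{n<N}\varrho_{d_n}$, so on finitely many coordinates $\mu_a$ is controlled by a finite multiple of $\mu$ and the vanishing transfers. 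I would then fix the coordinates of $a$ in successive finite blocks, at each stage using this finite-dimensional domination to annihilate the newly relevant part of $P$, while letting the next block grow fast enough (say $\sum_n d_n/(a_n+1)<\infty$) that a Hellinger/Borel–Cantelli tail estimate keeps the contribution of the still-undetermined coordinates negligible.

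The hard part is reconciling these two regimes. Global domination of $\mu$ by a multiple of $\mu_a$ is \emph{impossible}, since $\prod_n(d_n+1)=\infty$, and likewise one cannot hope for absolute continuity between $\mu$ and $\mu_a$ (by Kakutani's dichotomy a fast-growing $a$ forces them mutually singular); so the finite-block estimate must be threaded together with the tail bound along a single diagonal. I expect the genuinely delicate step to be handling the uncountable family of translates $x$ simultaneously: before diagonalising I would reduce the vanishing of $g$ to a countable family of finitary constraints (controlling $\mu_a(B+x)$ only through finitely many coordinates of $x$ at each stage, and using the tail estimate to propagate this to all $x$), so that the block construction only ever has to defeat countably many threats while the fast growth of $a$ absorbs the rest.
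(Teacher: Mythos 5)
Your opening moves are sound and coincide with the paper's: the trivial direction, the reduction to a compactly supported witness $\mu$ contained (after translation) in a product $\prod_n\llbracket -M(n),0\rrbracket$, and the fact that convolution preserves being a witness. Your side observation that the bad set $P=\{x:\mu_a(B+x)>0\}$ is Haar null for \emph{every} $a$ is also correct, but, as you yourself suspect, it is a dead end. The genuine gap is that the actual construction of $a$ -- the entire content of the theorem -- is replaced by a speculative block-by-block plan that you concede is unresolved, and the one tool you propose for it points the wrong way: dominating the marginal of $\mu$ by $\prod_{n<N}(d_n+1)$ times the uniform measure transfers nullity \emph{from} the uniform measure \emph{to} $\mu$, whereas you need the opposite transfer, from ``$\mu(B+x)=0$ for all $x$'' to ``$\mu_a(B+x)=0$ for all $x$''. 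That requires some form of absolute continuity $\mu_a\ll(\text{a measure known to kill all translates of }B)$, and no amount of finite-block domination of $\mu$ produces it; nor is there an evident way to reduce the vanishing of $x\mapsto\mu_a(B+x)$, for an arbitrary Borel $B$, to countably many finitary constraints.

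The missing idea is in fact one step away from what you already wrote. You noted that $\nu=\mu*\mu_N$ is a witness for any $N$; the key point is that this $\nu$ is \emph{exactly uniform on a central zone}. If $\supp\mu\subseteq\prod_n\llbracket -M(n),0\rrbracket$ and $X$ is a Borel subset of $\prod_n\llbracket 0,a(n)\rrbracket$ with $a(n)=N(n)-M(n)$, then for every $y\in\supp\mu$ one has $\mu_N(X-y)=\mu_N(X)$ (shifting the uniform measure $\varrho_{N(n)}$ by $y(n)\in\llbracket -M(n),0\rrbracket$ does not change its values on subsets of $\llbracket 0,a(n)\rrbracket$), hence
\begin{equation*}
\nu(X)=\int_{\supp\mu}\mu_N(X-y)\intby\mu(y)=\mu_N(X).
\end{equation*}
Choosing $N(n)$ so large that $\prod_n\bigl(1-\tfrac{M(n)}{N(n)+1}\bigr)>0$ makes the central zone $\prod_n\llbracket 0,a(n)\rrbracket=\supp\mu_a$ have positive $\nu$-measure, and then $\mu_a$ is precisely the normalized restriction of $\nu$ to that zone: $\mu_a(X)=\nu(X\cap\supp\mu_a)/\nu(\supp\mu_a)$. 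Consequently $\nu(B+x)=0$ for all $x$ immediately gives $\mu_a(B+x)=0$ for all $x$. This is the paper's argument; it handles the uncountable family of translates in one stroke via $\mu_a\ll\nu$, and all your worries about Kakutani dichotomy, Borel--Cantelli tail estimates, and diagonalization evaporate -- they were artifacts of trying to compare $\mu_a$ with $\mu$ directly rather than with the smoothed witness $\mu*\mu_N$.
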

\begin{definition}
 We call a sequence $(a(n))_{n\in\omega}$ satisfying this a \emph{witness sequence} for $B$.
\end{definition}

This theorem is motivated by \cite[Theorem 4.1]{So}, but that result works in a more general setting and shows that one can always choose a witness measure from another class of \enquote{simple measures}. 
It would be possible to modify the proof of \cite[Theorem 4.1]{So} to prove our result, but due to technical difficulties we give a different, self-contained proof.

\begin{proof}
The \enquote{if} part of the statement is trivial. To prove the \enquote{only if} part, assume that $B$ is a Borel Haar null subset of $\mathbb{Z}^\omega$.

It is not very hard to prove that every Haar null set has a witness measure with compact support, for a proof of this, see e.g.\ \cite[Theorem 4.1.4]{EN}. Using this, let $\mu$ be a witness measure for $B$ such that $\supp\mu$ is compact and therefore $\pr_n(\supp\mu)\subset \mathbb{Z}$ is compact (i.e.\ finite) for every $n\in\omega$.

To simplify the calculations, also suppose that $\supp\mu$ only contains sequences with nonpositive elements. This is always possible, as we may replace $\mu$ by $\mu'(X)=\mu(X+\ell)$ where $\ell\in \mathbb{Z}^\omega$ is the sequence $\ell(n)=\max(\pr_n(\supp\mu))$.

Let $M(n)= -\min(\pr_n(\supp\mu))$. It is clear that $\supp\mu\subseteq \prod_{n\in\omega} \llbracket -M(n), 0\rrbracket$. Choose a sequence $N(n)$ of (large) positive integers such that $N(n)>2M(n)$ (for every $n\in\omega$) and moreover
\[\prod_{n\in\omega}\left(1-\frac{M(n)}{N(n)+1}\right)>0.\]

Let $\nu$ be the measure $\nu=\mu * \mu_N$. This is the convolution of Borel probability measures, hence itself a Borel probability measure on $\mathbb{Z}^\omega$. Applying the definition of convolution, then using that $\mu$ is a witness measure we can see that for every $x\in \mathbb{Z}^\omega$
\[\nu(B+x)=\int_{\mathbb{Z}^\omega} \underbrace{\mu(B+x-y)}_0\intby\mu_N(y) = 0,\]
i.e.\ $\nu$ is also a witness measure for $B$. It is easy to see that \[\supp\nu\subseteq \prod_{n\in\omega}\llbracket -M(n), N(n)\rrbracket.\]

The measure $\nu$ is a \enquote{uniformized} variant of $\mu$, in fact if we ignore some \enquote{border zones} then the measure will be \enquote{uniform} on the \enquote{central zone} and this central zone will have positive measure. This will allow us to restrict $\nu$ to this central zone, normalize it and get a witness measure that is of the form $\mu_a$ for a witness sequence $a\in \mathbb{Z}_+^\omega$.

This heuristic statement can be formalized as the following claim:

\begin{claim}\label{cla:fst} Define $a(n)=N(n)-M(n)$. The set $\supp\mu_a=\prod_{n\in\omega}\llbracket 0, a(n)\rrbracket$ has positive $\nu$-measure. Moreover, \[\mu_a(X)=\frac{\nu(X\cap\supp\mu_a)}{\nu(\supp\mu_a)}\] holds for every $X\subseteq \mathbb{Z}^\omega$ (and these are defined for the same sets).
\end{claim}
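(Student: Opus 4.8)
The plan is to verify the displayed identity first on the product (basic cylinder) sets and then extend it to all sets by a uniqueness argument. Concretely, I would check that, up to the normalizing constant, $\mu_a$ agrees with the set function $X \mapsto \nu(X \cap \supp\mu_a)$ on every set of the form $X = \prod_{n\in\omega} A_n$ with $A_n \subseteq \mathbb{Z}$ and $A_n = \mathbb{Z}$ for all but finitely many $n$. These sets form a $\pi$-system generating $\mathcal{B}(\mathbb{Z}^\omega)$, so once the normalized restriction of $\nu$ and $\mu_a$ agree on them (and once I know $\nu(\supp\mu_a)>0$, which the same computation provides), they agree as Borel probability measures, hence also as completed measures. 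The latter is what settles the parenthetical remark that the two measures are defined for the same sets, since completions of measures agreeing on all Borel sets have identical domains and null ideals.

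The heart of the argument is a coordinatewise computation exploiting the geometry of the central zone. Writing out the convolution,
\[\nu(X \cap \supp\mu_a) = \int_{\supp\mu} \mu_N\bigl((X \cap \supp\mu_a) - y\bigr) \intby\mu(y),\]
I would fix $y \in \supp\mu$, so that $y(n) \in \llbracket -M(n), 0\rrbracket$, i.e.\ $-y(n) \in \llbracket 0, M(n)\rrbracket$. Since $(X\cap\supp\mu_a)-y = \prod_{n}\bigl((A_n \cap \llbracket 0, a(n)\rrbracket) - y(n)\bigr)$ and each factor is a subset of $\llbracket -y(n),\, a(n) - y(n)\rrbracket \subseteq \llbracket 0, N(n)\rrbracket$ (using $a(n)+M(n)=N(n)$ and $-y(n)\ge 0$), the shifted central block always lands inside the region where $\varrho_{N(n)}$ is uniform with weight $1/(N(n)+1)$. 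Hence, by the product structure of $\mu_N$,
\[\mu_N\bigl((X\cap\supp\mu_a) - y\bigr) = \prod_{n\in\omega}\frac{|A_n \cap \llbracket 0, a(n)\rrbracket|}{N(n)+1},\]
and the crucial point is that this value does not depend on $y$. This independence is exactly what makes the non-product measure $\mu$ disappear: integrating a constant against the probability measure $\mu$ gives $\nu(X\cap\supp\mu_a) = \prod_{n} \frac{|A_n \cap \llbracket 0, a(n)\rrbracket|}{N(n)+1}$ (the infinite product being well defined, as all but finitely many factors equal $\frac{a(n)+1}{N(n)+1}$).

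Specializing to $X = \mathbb{Z}^\omega$ yields $\nu(\supp\mu_a) = \prod_{n} \frac{a(n)+1}{N(n)+1} = \prod_{n}\bigl(1 - \frac{M(n)}{N(n)+1}\bigr)$, which is positive by the choice of $N$; this proves the first assertion and legitimizes the normalization. Dividing then gives $\frac{\nu(X \cap \supp\mu_a)}{\nu(\supp\mu_a)} = \prod_{n} \frac{|A_n \cap \llbracket 0, a(n)\rrbracket|}{a(n)+1} = \prod_{n} \varrho_{a(n)}(A_n) = \mu_a(X)$, as desired. I expect the main obstacle to be the bookkeeping in the coordinatewise step: one must confirm that for every admissible shift $-y(n) \in \llbracket 0, M(n)\rrbracket$ the whole shifted central block $\llbracket 0, a(n)\rrbracket - y(n)$ stays within $\llbracket 0, N(n)\rrbracket$, since it is precisely this \enquote{no overflow into the border zones} phenomenon, guaranteed by the definition $a(n) = N(n) - M(n)$, that simultaneously uniformizes $\nu$ on the central zone and erases all dependence on $\mu$.
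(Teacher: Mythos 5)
Your proof is correct, and it reaches the conclusion by a somewhat different route than the paper. Both arguments hinge on the same key observation: writing $\nu(Z)=\int_{\supp\mu}\mu_N(Z-y)\intby\mu(y)$ and noting that for $y\in\supp\mu$ the shifted central zone stays inside $\prod_{n\in\omega}\llbracket 0,N(n)\rrbracket$, where $\mu_N$ is uniform, so the integrand is independent of $y$ and the non-product measure $\mu$ integrates away. The difference is in how this observation is converted into the final identity. The paper proves two measure identities valid for \emph{all} Borel $X\subseteq\supp\mu_a$ at once: first $\nu(X)=\mu_N(X)$ (via Fubini and coordinatewise translation invariance), and then $\mu_a(X)=\lambda\,\mu_N(X)$ with $\lambda=\prod_{n\in\omega}\frac{N(n)+1}{a(n)+1}$, obtained by comparing density functions of $\mu_a$ with respect to $\mu_N$; combining these and evaluating at $X=\supp\mu_a$ gives both the positivity and the normalization. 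You instead compute both sides explicitly on the $\pi$-system of cylinder sets, getting the closed-form value $\prod_{n\in\omega}\frac{|A_n\cap\llbracket 0,a(n)\rrbracket|}{N(n)+1}$, and then invoke the Dynkin $\pi$--$\lambda$ uniqueness theorem once to pass to all Borel sets, and a completion argument for arbitrary $X$. Your version is more elementary and self-contained: it avoids the density-function detour, the positivity of $\nu(\supp\mu_a)$ falls out of the same computation, and you treat the parenthetical ``defined for the same sets'' more explicitly than the paper does (the paper simply asserts the extension from Borel to arbitrary $X$). What the paper's organization buys is a conceptually cleaner intermediate statement --- $\nu$ literally \emph{is} $\mu_N$ on the central zone --- which isolates the ``uniformization'' phenomenon without restricting attention to a generating class; note, though, that the paper's step identifying two product measures on the zone implicitly relies on the same uniqueness machinery you invoke openly, so the logical content of the two proofs is essentially the same.
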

\begin{proof}
For every Borel set $X\subseteq \supp\mu_a$ one can apply Fubini's theorem to get
\[\nu(X)=\int_{\mathbb{Z}^\omega} \mu_N(X-y)\intby\mu(y)=\int_{\supp\mu} \mu_N(X-y)\intby\mu(y).\]
If $y\in\supp\mu$ is arbitrary, then $-M(n)\le y(n)\le 0$ for every $n\in\omega$, hence \[\varrho_{N(n)}(W-y(n))=\varrho_{N(n)}(W)\] for every $W\subseteq\llbracket 0, a(n)\rrbracket$ (using that $a(n)=N(n)-M(n)$ and $\varrho_k$ is a uniform distribution). Moreover, notice that $\mu_N$ is the product of the measures $(\varrho_{N(n)})_{n\in\omega}$ and the measure which assigns $\mu_N(P-y)$ to the Borel set $P\subseteq\mathbb{Z}^\omega$ is the product of the measures which assign $\varrho_{N(n)}(W-y(n))$ to the set $W\subseteq\mathbb{Z}$. Thus the equality $\varrho_{N(n)}(W-y(n))=\varrho_{N(n)}(W)$, which holds for every $W\subseteq\llbracket 0, a(n)\rrbracket$, yields that these two product measures coincide when they are restricted to Borel subsets of $\prod_{n\in\omega}\llbracket 0, a(n)\rrbracket=\supp\mu_a$. If we apply this to a Borel subset $X\subseteq\supp\mu_a$, then we get $\mu_N(X-y)=\mu_N(X)$ for every $y\in \supp\mu$. This implies that
\begin{equation}\label{eqn:nuIsmuN}\nu(X)=\int_{\supp\mu}\mu_N(X)\intby\mu(y)=\mu_N(X).\end{equation}
For $n\in\omega$ and $W\subseteq \mathbb{Z}$ it is straightforward from the definitions that
\[\varrho_{a(n)}(W)=\int_{W} f_n(w) \intby\varrho_{N(n)}(w)\]
where $f_n : \mathbb{Z} \to \mathbb{R}$ is the density function defined as
\[f_n(w)=\begin{cases}\frac{N(n)+1}{a(n)+1}&\text{ if $w\in \llbracket 0, a(n)\rrbracket$,}\\0&\text{ if $w\in \mathbb{Z}\setminus\llbracket 0, a(n)\rrbracket$.}\end{cases}\]
If $X\subseteq \mathbb{Z}^\omega$ is a Borel set, then taking the product of these yields that
\[\mu_a(X) = \int_{X} f(x) \intby\mu_N(x)\]
where $\lambda = \prod_{n\in\omega} \frac{N(n)+1}{a(n)+1}$ and $f: \mathbb{Z}^\omega \to \mathbb{R}$ is the density function
\[ f(x) =\begin{cases}\lambda&\text{ if $x(n)\in \llbracket 0, a(n)\rrbracket$ for all $n\in\omega$,}\\0&\text{otherwise.}\end{cases} \]
Notice that $\lambda$ is trivially positive, but $\lambda<\infty$ because we assumed that
\[\prod_{n\in\omega}\left(1-\frac{M(n)}{N(n)+1}\right)>0.\]
In particular, for a Borel set $X\subseteq \supp\mu_a\,\left( = \prod_{n\in\omega}\llbracket 0, a(n)\rrbracket\right)$ this means that
\begin{equation}\label{eqn:lambdaIntr}\mu_a(X)=\lambda\cdot \mu_N(X)\end{equation}
Considering the special case when $X=\supp\mu_a$, we get
\[\lambda\cdot \mu_N(\supp\mu_a)=\mu_a(\supp\mu_a)=1,\]
\begin{equation}\label{eqn:muNOnSuppMua}\mu_N(\supp\mu_a)=\frac{1}{\lambda}>0.\end{equation}
Using \eqref{eqn:nuIsmuN}, \eqref{eqn:lambdaIntr} and \eqref{eqn:muNOnSuppMua} we can see that $\nu(X)=\mu_a(X)\cdot\nu(\supp\mu_a)$ for every $X\subseteq \supp\mu_a$. Here $\nu(\supp\mu_a)>0$ is implied by \eqref{eqn:nuIsmuN} and \eqref{eqn:muNOnSuppMua}. Therefore for an arbitrary $X\subseteq \mathbb{Z}^\omega$, $\mu_a(X)=\frac{\nu(X\cap \supp\mu_a)}{\nu(\supp\mu_a)}$, and this concludes the proof of the claim.
\end{proof}
As we return to proving \autoref{lem:borelsubsets}, we already know that for every $x\in\mathbb{Z}^\omega$, $\nu(B+x)=0$. This implies that for every $x\in\mathbb{Z}^\omega$, $\nu((B+x)\cap \supp \mu_a)=0$ and hence (using the recently proved \autoref{cla:fst}) $\mu_a(B+x)=0$. This means that $a$ is indeed a witness sequence.
\end{proof}

\section{A function with a surprisingly thick graph}

We say that a partial function $f: X\to Y$ is $\mathbf{\Sigma}^0_\xi$-measurable if the preimages of open subsets of $Y$ are $\mathbf{\Sigma}^0_\xi$ subsets of $X$.

The following theorem is the analogue of \cite[Theorem 3.1]{EVGd} and the proof technique is also similar.

\newcommand{\wnseqs}{\mathbb{Z}_+^\omega}
\begin{theorem}\label{thm:thickgraph}
If $2\le\xi<\omega_1$, then there exists a partial function $f: \wnseqs \times 2^\omega \rightarrowtail \mathbb{Z}^\omega$ which satisfies the following properties:
\begin{enumerate}[label=(\roman*)]
\item $f$ is $\mathbf{\Sigma}^0_\xi$-measurable and $\graph(f)$ is the difference of two $\mathbf{\Pi}^0_\xi$ subsets of $\wnseqs \times 2^\omega \times \mathbb{Z}^\omega$,
\item $\displaystyle (\forall (a,x)\in\wnseqs\times2^\omega) ( (a, x) \in \dom(f) \Rightarrow f(a, x) \in \supp\mu_a)$,
\item $\displaystyle (\forall a\in\wnseqs)(\forall S\in \mathbf{\Pi}^0_\xi(2^\omega \times \mathbb{Z}^{\omega}))\left(\graph(f_a) \subseteq S \Rightarrow (\exists x \in 2^\omega) (\mu_a(S_x)>0)\right)$
\end{enumerate}
(Here for $a\in\wnseqs$, $f_a$ denotes the partial function $f_a : 2^\omega \rightarrowtail \mathbb{Z}^\omega$, $f_a(x) = f(a, x)$.)
\end{theorem}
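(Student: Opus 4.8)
The goal is to construct a partial function $f:\wnseqs\times 2^\omega\rightarrowtail\mathbb{Z}^\omega$ whose graph is ``thick'' in the precise sense of property (iii): any $\mathbf{\Pi}^0_\xi$ set containing a single slice $\graph(f_a)$ must have a fiber of positive $\mu_a$-measure. The plan is to build $f$ by a \emph{fusion/diagonalization over a fixed enumeration of $\mathbf{\Pi}^0_\xi$ codes}. Since $\mathbf{\Pi}^0_\xi(2^\omega\times\mathbb{Z}^\omega)$ has a universal set, fix a $\mathbf{\Pi}^0_\xi$ set $\mathcal{U}\subseteq 2^\omega\times(2^\omega\times\mathbb{Z}^\omega)$ that is universal, so that every $\mathbf{\Pi}^0_\xi$ set $S$ equals $\mathcal{U}_c$ for some code $c\in 2^\omega$. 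The defining idea is to let the second coordinate $x\in 2^\omega$ of the domain \emph{encode the code} $c$ of the set $S$ we are trying to escape, and to arrange that $f(a,x)$ lands outside $S=\mathcal{U}_c$ on a $\mu_a$-positive set of the relevant fiber whenever $S$ fails to contain that much of the graph. Concretely, I would aim to make $f(a,x)$ a point (or the limit of an inductive construction of points) in $\supp\mu_a=\prod_n\llbracket 0,a(n)\rrbracket$, so that (ii) holds automatically.

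First I would set up the coding so that property (iii) becomes a diagonalization statement: given $a$ and a $\mathbf{\Pi}^0_\xi$ set $S$ with $\graph(f_a)\subseteq S$, pick the code $x=c$ with $\mathcal{U}_c=S$; then I must ensure $\mu_a(S_c)>0$. The key is therefore to guarantee, \emph{for every code $c$}, that the fiber $\{f(a,x):x\in[\text{cylinder refining }c]\}$ is ``spread out'' enough inside $\supp\mu_a$ that if it is covered by $\mathcal{U}_c$, then $\mathcal{U}_c$'s fiber over $c$ has positive measure. This is where I would import the construction technique of \cite[Theorem 3.1]{EVGd}: one builds $f$ by a tree/fusion argument where at each finite stage one has committed to finitely many coordinates of $f(a,x)$ on a clopen piece of $2^\omega$, and one uses the open sets of a $\mathbf{\Sigma}^0_\xi$-approximation (via a change-of-topology / Baire-class representation of $\mathbf{\Pi}^0_\xi$ sets) to refine. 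The measure-theoretic engine is that $\mu_a$ is a product of \emph{uniform} measures, so at coordinate $n$ there are $a(n)+1$ equally likely choices; by distributing the images across many such choices one can force the covering set to capture a definite proportion of $\mu_a$-mass, yielding $\mu_a(S_c)>0$.

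For the complexity bound (i), I would define $f$ so that $\graph(f)$ is naturally expressed as a set built from the universal $\mathbf{\Pi}^0_\xi$ machinery: the graph should be a Boolean combination that comes out as the difference of two $\mathbf{\Pi}^0_\xi$ sets, and then $\mathbf{\Sigma}^0_\xi$-measurability of $f$ follows because preimages of basic open sets in $\mathbb{Z}^\omega$ (cylinders fixing finitely many coordinates) are read off from finitely many stages of the construction and land in $\mathbf{\Sigma}^0_\xi$. Here I would lean on the standard fact that, for $\xi\ge 2$, the relevant ambiguous and difference classes are closed under the finite Boolean and countable operations used in the construction, and on $2\le\xi$ to have enough room (the base case $\xi=2$ corresponds to the $G_\delta$ statement in the abstract). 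I would verify (i) last, after the inductive definition is pinned down, by tracking the quantifier/operation complexity of the predicate ``$(a,x,z)\in\graph(f)$''.

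The main obstacle I anticipate is property (iii): making the abstract ``thickness'' genuinely robust against \emph{all} $\mathbf{\Pi}^0_\xi$ covers simultaneously while keeping the graph in the difference class. The difficulty is that a $\mathbf{\Pi}^0_\xi$ set is not closed, so one cannot argue by a simple closed-graph/compactness trick; instead one must use a representation of $\mathbf{\Pi}^0_\xi$ sets as decreasing intersections (or as closed sets in a finer Polish topology obtained from a countable $\mathbf{\Sigma}^0_{<\xi}$ algebra) and run the fusion against that representation, uniformly in the code $c$. Balancing this against the measure lower bound --- ensuring the surviving proportion of $\mu_a$-mass does not decay to $0$ across the infinitely many coordinates, exactly as the convergence condition $\prod_n(1-M(n)/(N(n)+1))>0$ was used in \autoref{lem:borelsubsets} --- is the crux, and I expect it to require carefully choosing at coordinate $n$ only a small fraction of the $a(n)+1$ available values to ``burn'' so that $\sum_n(\text{fraction used})<\infty$, guaranteeing a positive residual measure.
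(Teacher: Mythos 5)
Your overall skeleton does match the paper's: fix a universal set, let the $2^\omega$ coordinate of the domain encode the set to be defeated, and win by diagonalization. However, there is a genuine gap at the load-bearing step, namely the construction of $f$ itself. Once the coding is set up, property (iii) reduces (by passing to the complement $B=(2^\omega\times\mathbb{Z}^\omega)\setminus S$, which is $\mathbf{\Sigma}^0_\xi$ and has co-null sections whenever all sections of $S$ are $\mu_a$-null) to the following selection problem: writing $A\subseteq\mathbb{Z}_+^\omega\times2^\omega\times\mathbb{Z}^\omega$ for the $\mathbf{\Sigma}^0_\xi$ set of triples $(a,x,g)$ such that $g\in\supp\mu_a$ and $g$ lies in the $\mathbf{\Sigma}^0_\xi$ set coded by $x$, one must choose, for \emph{every} $(a,x)$ with $\mu_a(A_{a,x})>0$, a point $f(a,x)\in A_{a,x}$, in such a way that $f$ is $\mathbf{\Sigma}^0_\xi$-measurable and $\graph(f)$ is a difference of two $\mathbf{\Pi}^0_\xi$ sets. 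This is precisely a uniformization theorem for Borel sets with large sections, and it is what the paper imports wholesale from Holick\'y \cite{Ho} via \autoref{thm:Holicky}; it is also exactly the point where the method of \cite{EVGd} loses control of the Borel class, since Jankov--von Neumann-type selectors carry no complexity bound. Your plan replaces this by a hand-built \enquote{fusion against a change of topology}, but no actual construction is given: you do not say how the fusion steps are chosen, how they cohere across the uncountably many codes, or why the resulting graph lands in the difference class rather than merely being Borel. Filling this in amounts to reproving Holick\'y's theorem, which is the real mathematical content of the proof.

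Moreover, the mechanism you designate as \enquote{the key} for (iii) --- making the image of a neighborhood of a code $c$ so spread out in $\supp\mu_a$ that any $\mathbf{\Pi}^0_\xi$ cover of the graph must have a fiber of positive measure --- cannot work as stated. Since every $a\in\mathbb{Z}_+^\omega$ makes $\mu_a$ atomless, every vertical section of $\graph(f_a)$ is a single point and hence $\mu_a$-null; thus $\graph(f_a)$ is always contained in a Borel set all of whose sections are null (for instance in itself, once it is Borel), no matter how spread out it is. What excludes such covers \emph{of low complexity} is not a quantitative spreading estimate but the diagonal escape: if all sections of $S$ were null, the code $x^*$ of its complement would satisfy $(a,x^*)\in\dom(f)$ and $f(a,x^*)\notin S_{x^*}$, contradicting $\graph(f_a)\subseteq S$. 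Consequently, the measure-theoretic bookkeeping you concentrate on (burning only a summable fraction of the values at each coordinate, in analogy with the product condition in the proof of \autoref{lem:borelsubsets}) is not where the difficulty lies; the difficulty is the complexity-bounded selection described above, which your proposal acknowledges as an obstacle but does not resolve.
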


The proof will use a large section uniformization result by Holick\'y. We will use the following statement, which is an immediate corollary of the results in \cite{Ho}.
\begin{corollary}\label{thm:Holicky}
Assume that $X$ and $Y$ are Polish spaces, $2\le \alpha\le\omega_1$, and $\mu : X\times \mathcal{B}(Y)\to [0,1]$ satisfies
\begin{enumerate}[label=(\alph*),nosep]
\item $\mu(x,\cdot)$ is a Borel probability measure on $Y$ for every $x\in X$, and
\item $\{x\in X : \mu(x, H)>r\}$ is open for every open $H\subset Y$ and $r\in \mathbb{R}$.
\end{enumerate}
Assume that $A\in\mathbf{\Sigma}^0_\alpha(X\times Y)$ and define $P = \{p\in X : \mu(p, A_p)>0\}$.

Then there exists a partial function $f : X\rightarrowtail Y$ such that $f$ is $\mathbf{\Sigma}^0_{\alpha}$-measurable, $\dom(f)=P$ and $\graph(f)\subseteq A$. Moreover, this partial function also satisfies that $\graph(f)\in \mathbf{\Pi}^0_{\alpha}(P\times Y)$.
\end{corollary}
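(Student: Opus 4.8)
The plan is to deduce the statement from the general large-section uniformization theorem of Holick\'y \cite{Ho}, after recasting hypotheses (a)--(b) into the form that theorem consumes and after peeling off the graph-complexity claim, which is a soft consequence of $\mathbf{\Sigma}^0_\alpha$-measurability and requires no selection at all. Concretely, I would proceed in three stages: first upgrade the lower-semicontinuity hypothesis (b) from fixed open subsets of $Y$ to arbitrary open subsets of the product $X\times Y$; then feed $A$ together with the parametrized measure $\mu$ into Holick\'y's selection theorem to obtain the $\mathbf{\Sigma}^0_\alpha$-measurable partial map $f$ with $\dom(f)=P$ and $\graph(f)\subseteq A$; and finally derive the bound $\graph(f)\in\mathbf{\Pi}^0_\alpha(P\times Y)$ from the measurability of $f$ alone.

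The central preliminary step is the following strengthening of (b): for every \emph{open} $U\subseteq X\times Y$ and every $r\in\mathbb{R}$ the set $\{x\in X:\mu(x,U_x)>r\}$ is open. To see this, fix countable bases of $X$ and $Y$ and write $U=\bigcup_j (G_j\times H_j)$ with $G_j\subseteq X$ and $H_j\subseteq Y$ basic open, so that $U_x=\bigcup_{\{j:\,x\in G_j\}}H_j$. Since $\mu(x,\cdot)$ is a measure, continuity from below gives that $\mu(x,U_x)>r$ holds exactly when there is a finite set $T$ with $x\in\bigcap_{j\in T}G_j$ and $\mu\bigl(x,\bigcup_{j\in T}H_j\bigr)>r$. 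For each fixed finite $T$ the corresponding set of $x$ is $\bigcap_{j\in T}G_j\,\cap\,\{x:\mu(x,\bigcup_{j\in T}H_j)>r\}$, which is open by (b); the countable union over all finite $T$ then yields the claim. This says that the parametrized measure is lower semicontinuous as a set function on open subsets of the product, which is precisely the largeness datum Holick\'y's machinery needs in order to keep the complexity of the selection pinned at the level of $A$.

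With this in hand I would invoke the uniformization theorem of \cite{Ho}: applied to the $\mathbf{\Sigma}^0_\alpha$ set $A$ and to the largeness notion \enquote{$A_x$ has positive $\mu(x,\cdot)$-measure}, it produces a partial function $f:X\rightarrowtail Y$ that is $\mathbf{\Sigma}^0_\alpha$-measurable, has graph contained in $A$, and is defined exactly on the large-section set $P=\{x:\mu(x,A_x)>0\}$. It then remains only to bound $\graph(f)$, which is a general fact about $\mathbf{\Sigma}^0_\alpha$-measurable maps (here using $\alpha\ge 2$) into a second-countable space. Fixing a countable basis $\{V_n\}$ of $Y$, a point $(x,y)\in P\times Y$ fails to lie on the graph iff some basic $V_n$ separates $f(x)$ from $y$, so
\[ (P\times Y)\setminus\graph(f)=\bigcup_n\bigl(f^{-1}(V_n)\times(Y\setminus V_n)\bigr). \]
Each $f^{-1}(V_n)$ is $\mathbf{\Sigma}^0_\alpha$ in $P$ and each $Y\setminus V_n$ is closed, so each term is $\mathbf{\Sigma}^0_\alpha$ in $P\times Y$ (a product of a $\mathbf{\Sigma}^0_\alpha$ set with a closed set, closed sets being $\mathbf{\Sigma}^0_\alpha$ since $\alpha\ge 2$); the countable union is $\mathbf{\Sigma}^0_\alpha$, whence $\graph(f)\in\mathbf{\Pi}^0_\alpha(P\times Y)$.

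The step I expect to be the real obstacle is the appeal to \cite{Ho}: one must confirm that Holick\'y's result is available with the \emph{sharp} complexity bound $\mathbf{\Sigma}^0_\alpha$ and with domain exactly $P$, for the measure-positivity notion of largeness, rather than merely a Borel or Baire-measurable selection. If it does not come in one shot, the fallback is a self-contained transfinite induction on $\alpha$: the base case selects into closed (or $F_\sigma$) sections by a Luzin scheme driven by the lower-semicontinuous largeness established above --- where the shrinking nested sections have positive measure, hence are nonempty, and their intersection genuinely lands in $A_x$ because the sections are closed --- while the inductive step reduces a $\mathbf{\Sigma}^0_\alpha$ set to a countable increasing union of lower-class pieces and uses continuity from below of $\mu(x,\cdot)$ to transfer positivity of $\mu(x,A_x)$ to one of those pieces. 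The genuine difficulty in that fallback, and the reason to prefer citing \cite{Ho}, is guaranteeing at the non-closed levels that the selected point lies in $A_x$ itself and not merely in its closure, which is exactly the obstruction Holick\'y's argument is engineered to handle.
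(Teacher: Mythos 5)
Your proposal is correct and takes essentially the same route as the paper: both obtain the selection by citing Holick\'y's machinery, and the one step you flagged as needing verification is exactly what the paper checks --- \cite[Lemma 2.1]{Ho} with $\alpha_0=1$ gives $P\in\mathbf{\Sigma}^0_\alpha(X)$ and $\alpha^*=\alpha$, after which \cite[Theorem 3.5]{Ho} applied to $A\cap(P\times Y)$ (intersecting to make the domain exactly $P$) yields $f$ with the sharp class $\mathbf{\Sigma}^0_\alpha$ and $\graph(f)\in\mathbf{\Pi}^0_\alpha(P\times Y)$, so your fallback induction is not needed. Your two additions are correct but superfluous: Holick\'y's hypotheses are precisely (a)--(b) as stated (no strengthening to open subsets of $X\times Y$ is required, since his Lemma 2.1 subsumes that computation), and the graph bound, which you derive soundly from $\mathbf{\Sigma}^0_\alpha$-measurability via $(P\times Y)\setminus\graph(f)=\bigcup_n\bigl(f^{-1}(V_n)\times(Y\setminus V_n)\bigr)$ using $\alpha\ge 2$, is already part of the conclusion of \cite[Theorem 3.5]{Ho}.
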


\begin{proof}[Proof of \autoref{thm:Holicky}.]
To prove that this holds for some $X$, $Y$, $\alpha$, $\mu$, and $A$, first apply \cite[Lemma 2.1]{Ho} (and the remarks preceding it) for $X$, $Y$, $\alpha$, $\mu$, $B=A$, and $\alpha_0=1$. The lemma yields that $P\in\mathbf{\Sigma}^0_\alpha(X)$, because $\alpha_0=1$ implies $\alpha^*=\alpha$ (here \enquote{$B$}, \enquote{$\alpha_0$}, and \enquote{$\alpha^*$} are introduced in the statement of the lemma).

This means we can apply \cite[Theorem 3.5]{Ho} for $X$, $Y$, $\alpha$, $\mu$, $B=A\cap (P\times Y)$, and $\alpha_0=1$ and choose $f=\xi$ (here \enquote{$B$}, \enquote{$\alpha_0$}, \enquote{$\alpha^*$}, and a function \enquote{$\xi$} are introduced in the statement of the theorem, the theorem states that $\xi$ has all the necessary properties; we use that $\alpha^*=\alpha$ holds again). 
\end{proof}

\begin{proof}[Proof of \autoref{thm:thickgraph}.]
Let $U \in \mathbf{\Sigma}^0_\xi(2^\omega\times 2^\omega\times \mathbb{Z}^\omega)$ be universal for the $\mathbf{\Sigma}^0_\xi$ subsets of $2^\omega\times \mathbb{Z}^\omega$, that is, for every $B\in \mathbf{\Sigma}^0_\xi(2^\omega \times \mathbb{Z}^\omega)$ there exists an $x\in2^\omega$ such that $U_x=B$ (for the existence of such a set see \cite[Theorem 22.3]{Ke}). The preimage of this set under the continuous map $(x,g)\mapsto (x,x,g)$ is the set
 \[U'=\{(x,g)\in 2^\omega\times \mathbb{Z}^\omega: (x,x,g)\in U\},\]
which is hence a $\mathbf{\Sigma}^0_\xi$ set. Later we will use that $U'_x=U_{x,x}$ for every $x\in 2^\omega$. 

Notice that the set
\[S=\{(a, x, g) \in \wnseqs \times 2^\omega\times \mathbb{Z}^\omega: g\in\supp\mu_a\}\] 
is equal to
 \[\{(a,x,  g) \in \wnseqs \times 2^\omega\times \mathbb{Z}^\omega : (\forall n\in \omega) (0\le g(n) \le a(n))\}\]
and hence is trivially closed.

We can combine these to define the $\mathbf{\Sigma}^0_\xi$ set
\[U''=(\wnseqs\times U')\cap S\quad \big(\ \subseteq \wnseqs\times 2^\omega\times \mathbb{Z}^\omega\ \big).\]

We will apply \autoref{thm:Holicky} for the Polish spaces $X=\wnseqs\times 2^\omega$ and $Y=\mathbb{Z}^\omega$, the map $\mu: \wnseqs\times2^\omega\times \mathcal{B}(\mathbb{Z}^\omega)\to[0,1]$ defined by $\mu((a,x), S) = \mu_a(S)$, the set $A=U''$ and $\alpha=\xi$. As in \autoref{thm:Holicky}, define $P = \{p\in \wnseqs\times 2^\omega : \mu(p, A_p)>0\}$. It is clear that condition (a) is satisfied.

\begin{claim} Condition (b) of \autoref{thm:Holicky} is also satisfied, that is, $\{(a,x)\in \wnseqs\times 2^\omega : \mu((a,x),H)>r\}$ is open for every open $H\subset \mathbb{Z}^\omega$ and $r\in \mathbb{R}$.
\end{claim}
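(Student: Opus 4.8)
The plan is to observe first that the measure $\mu((a,x),H)=\mu_a(H)$ does not depend on the $2^\omega$-coordinate $x$ at all, so it suffices to prove that $\{a\in\wnseqs : \mu_a(H)>r\}$ is open in $\wnseqs$; the set appearing in the claim is then simply the product of this open set with $2^\omega$. Recall that $\wnseqs=\mathbb{Z}_+^\omega$ carries the product topology in which each factor $\mathbb{Z}_+$ is discrete, so a set is open precisely when it is a union of \emph{basic} sets that fix only finitely many coordinates. Hence the key point will be to show that membership in $\{a : \mu_a(H)>r\}$ can always be witnessed by finitely many coordinates of $a$.

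First I would use that $\mathbb{Z}^\omega$ is second countable, with the clopen cylinders $[s]$ (for $s\in\mathbb{Z}^{<\omega}$) forming a countable basis, so the open set $H$ can be written as a countable union $H=\bigcup_{i\in\omega}C_i$ of such cylinders. Setting $H_k=\bigcup_{i\le k}C_i$ gives an increasing sequence of clopen sets with $\bigcup_k H_k=H$, so by continuity of $\mu_a$ from below we have $\mu_a(H)=\sup_k \mu_a(H_k)$ for every $a$. In particular, if $\mu_a(H)>r$, then $\mu_a(H_k)>r$ for some $k$.

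Next I would exploit that each $H_k$ is a finite union of cylinders and therefore depends on only finitely many coordinates, say on coordinates $0,\dots,m-1$. Since $\mu_a$ is the product measure $\bigotimes_n \varrho_{a(n)}$ and $\varrho_{a(n)}(\{j\})$ equals $\tfrac{1}{a(n)+1}$ when $0\le j\le a(n)$ and $0$ otherwise, the quantity $\mu_a(H_k)$ is a function of $a(0),\dots,a(m-1)$ alone (for instance, refining the cylinders to common length $m$ exhibits $\mu_a(H_k)$ as a finite sum of products of such factors). Consequently $\{a : \mu_a(H_k)>r\}$ is determined by these finitely many discrete coordinates, hence is open in $\wnseqs$; moreover it is contained in $\{a : \mu_a(H)>r\}$ because $H_k\subseteq H$ forces $\mu_a(H_k)\le\mu_a(H)$.

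Combining these observations yields the claim: every $a$ with $\mu_a(H)>r$ lies in some open set $\{a' : \mu_{a'}(H_k)>r\}$ which is itself contained in $\{a : \mu_a(H)>r\}$, so the latter is a union of open sets and hence open. I do not expect a genuine obstacle here; the only point requiring care is the interplay between the finite-dimensional approximations $H_k$ and the product structure of $\mu_a$, namely checking that the value $\mu_a(H_k)$ really is locally constant in $a$ so that lower semicontinuity upgrades to honest openness rather than merely lower semicontinuity in a metric sense — and this is exactly what the discreteness of the factors $\mathbb{Z}_+$ provides.
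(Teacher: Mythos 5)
Your proof is correct and follows essentially the same route as the paper: both exhaust the open set $H$ by finite unions of cylinders, observe that the $\mu_a$-measure of such a finite-stage set depends only on finitely many (discrete) coordinates of $a$, and conclude that $\{a : \mu_a(H)>r\}$ is a union of open sets. The only cosmetic difference is that the paper disjointifies the cylinders and compares $r$ with partial sums of the resulting series, whereas you keep the union non-disjoint and invoke continuity of the measure from below; these are interchangeable.
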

\begin{proof}
$H$ can be written as $H=\bigcup_{i\in \omega} [s_i]$ for some sequences $s_i\in\mathbb{Z}^{<\omega}$ and we may also assume that this union is disjoint. Then $\mu((a,x),H)=\mu_a(H)=\sum_{i\in\omega} \mu_a([s_i])$.

Notice that for $K\in\omega$, the set $\{(a,x)\in\wnseqs\times 2^\omega : \sum_{i\in K} \mu_a([s_i])>r\}$ is open, as $\sum_{i\in K} \mu_a([s_i])$ depends only on the first $\max_{i\in K} |s_i|$ elements of the sequence $a$. This means that
\begin{align*}
\{(a,x)\in \wnseqs\times 2^\omega : \mu((a,x),H)>r\} &= \{(a,x)\in\wnseqs\times 2^\omega : \sum_{i\in \omega} \mu_a([s_i])>r\}\\
&=\bigcup_{K\in\omega}\{(a,x)\in\wnseqs\times 2^\omega : \sum_{i\in K} \mu_a([s_i])>r\}
\end{align*}
is a union of open sets and this proves our claim.
\end{proof}

The application of \autoref{thm:Holicky} proves the existence of a partial function $f : \wnseqs \times 2^\omega \rightarrowtail \mathbb{Z}^\omega$ such that $f$ is $\mathbf{\Sigma}^0_\xi$-measurable, $\dom(f)=P$, $\graph(f)\subseteq A$, and $\graph(f)\in \mathbf{\Pi}^0_\xi(P\times \mathbb{Z}^\omega)$. To finish the proof of \autoref{thm:thickgraph} we show that this $f$ satisfies Properties (i)--(iii).

As $P=\dom(f)=f^{-1}(\mathbb{Z}^\omega)$ is a $\mathbf{\Sigma}^0_\xi$ set, $\graph(f)\in \mathbf{\Pi}^0_\xi(P\times \mathbb{Z}^\omega)$ is clearly the difference of two $\mathbf{\Pi}^0_\xi$ subsets of $\wnseqs \times 2^\omega \times \mathbb{Z}^\omega$, concluding the proof of Property (i).

Property (ii) is clear because for all $(a,x)\in\dom(f)$, 
\[f(a,x)\in U''_{a,x}=U'_x\cap \supp \mu_a\subseteq \supp \mu_a.\]

To prove Property (iii), suppose to the contrary that there exists $a\in\wnseqs$ and $S\in \mathbf{\Pi}^0_\xi(2^\omega\times \mathbb{Z}^\omega)$ such that $\graph(f_a)\subseteq S$ but for all $x\in 2^\omega$, $\mu_a(S_x)=0$. The complement of $S$ is the $\mathbf{\Sigma}^0_\xi$ set $B=(2^\omega\times \mathbb{Z}^\omega)\setminus S$. By the universality of $U$, there is an $x^*\in 2^\omega$ such that $U_{x^*}=B$. We know that for every $x\in 2^\omega$, $\mu_a(B_x)=1-\mu_a(S_x)=1>0$, in particular $\mu_a(B_{x^*})=\mu_a(U_{x^*,x^*})=\mu_a(U'_{x^*})>0$. It is clear from the definitions that $U''_{a,x^*} = U'_{x^*}\cap \supp(\mu_a)$, and so $\mu_a(U''_{a, x^*})>0$.

Therefore $(a, x^*)\in P=\dom(f)$ and then $\graph(f)\subseteq U''$ yields that
\[f(a, x^*)\in U''_{a,x^*}\subseteq U'_{x^*}=U_{x^*, x^*}=B_{x^*}.\]
But we also supposed that $\graph(f_a)\subseteq S$, and this yields $f(a, x^*)\in S_{x^*}= \mathbb{Z}^\omega\setminus B_{x^*}$, a contradiction.

This proves that $f$ indeed satisfies the requirements of \autoref{thm:thickgraph}.
\end{proof}

\section{The main result}

Now we are ready to prove our main result, which is stated in the following theorem.

\thmmain{\label{thm:main}}{ $E$}

\begin{proof} First we will construct some simple functions which will be useful in our proof.

Let us define the function 
\[\theta : \mathbb{Z}_+\times \{0,1\}\times \mathbb{Z}\to \mathbb{Z}\,,\qquad\theta(n, b,z)=(n-1)(n+4)+b(n+2)+z.\]
Elementary calculations show that $\theta(n, 1,0)=\theta(n,0,0)+(n+2)$, $\theta(n+1, 0,0)=\theta(n,1,0)+(n+2)$ and hence if we restrict $\theta$ to the set
\[T=\{(n, b, z) : n\in\mathbb{Z}_+, b\in \{0,1\}, z\in \llbracket 0, n+1\rrbracket\},\]
then $\theta(T)=\mathbb{N}$ is the set of nonnegative integers, and the restricted function $\theta{\upharpoonright_T}$ is an order preserving bijection (when $T$ is ordered lexicographically and $\mathbb{N}$ has its usual ordering). Let $\iota : \mathbb{N}\to T$ be the inverse of this restriction.

We can let $\theta$ act elementwise on sequences of length $\omega$, that is, we can define \[t : \mathbb{Z}_+^\omega\times 2^\omega\times \mathbb{Z}^\omega\to \mathbb{Z}^\omega\,,\qquad (t(a, x, g))(k) = \theta(a(k), x(k), g(k))\text{ for all $k\in\omega$.}\] Later we will use the fact that if $a\in\mathbb{Z}_+^\omega$ and $x\in 2^\omega$ are fixed, then $t(a,x,g) = t(a,x,0)+g$, i.e.\ $g \mapsto t(a,x,g)$ is a translation.

Analogously, we may also let $\iota$ act elementwise on sequences of length $\omega$ to get a function $i : \mathbb{N}^\omega\to T^\omega$. It is clear that both $t$ and $i$ are continuous (in fact, Lipschitz).

With a slight abuse of notation let us identify $T^\omega$ and the set
\[\mathcal{T}=\{(a,x,g)\in \wnseqs\times2^\omega\times\mathbb{Z}^\omega: (\forall k)(g(k)\in \llbracket 0, a(k)+1\rrbracket)\}\]
($T^\omega$ contains sequences of triples, $\mathcal{T}$ contains triples of sequences, the natural map between them is a homeomorphism). As $\iota$ is the inverse of a restriction of $\theta$, the same holds for $i$ and $t$: every $(a,x,g)\in \mathcal{T}$ satisfies $i(t(a,x,g))=(a,x,g)$ and every $s\in \mathbb{N}^\omega$ satisfies $t(i(s))=s$.

Let $f$ be a partial function $f : \wnseqs\times 2^\omega \rightarrowtail \mathbb{Z}^\omega$ which satisfies the conditions of \autoref{thm:thickgraph}.

Now we are able to define $E$ as
\begin{align*}
E=t(\graph(f))&= \{t(a,x,g) :(a,x,g)\in\graph(f)\}=\\
&= \{t(a,x,f(a,x)) :(a,x)\in\dom(f)\}.
\end{align*}

\begin{claim}
$E$ is the difference of two $\mathbf{\Pi}^0_\xi$ subsets of $\mathbb{Z}^\omega$.
\end{claim}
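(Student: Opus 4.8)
The plan is to show that $E = t(\graph(f))$ is the difference of two $\mathbf{\Pi}^0_\xi$ sets by transporting the descriptive-set-theoretic structure of $\graph(f)$ through the map $t$. By Property (i) of \autoref{thm:thickgraph}, $\graph(f)$ is the difference of two $\mathbf{\Pi}^0_\xi$ subsets of $\wnseqs\times 2^\omega\times \mathbb{Z}^\omega$. The main idea is that $t$ restricted to $\mathcal{T}$ is essentially a homeomorphism onto $\mathbb{N}^\omega$ (via the identification $\mathcal{T}\cong T^\omega$ and the mutually inverse continuous maps $i,t$), so image under $t$ of a set living inside $\mathcal{T}$ should preserve Borel complexity. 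The key observation is that $\graph(f)\subseteq \mathcal{T}$: indeed, Property (ii) of \autoref{thm:thickgraph} guarantees that for $(a,x)\in\dom(f)$ we have $f(a,x)\in\supp\mu_a$, i.e.\ $0\le f(a,x)(k)\le a(k)\le a(k)+1$ for all $k$, so $(a,x,f(a,x))\in\mathcal{T}$.

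First I would make the homeomorphism precise. The map $t{\upharpoonright_{\mathcal{T}}} : \mathcal{T}\to \mathbb{N}^\omega$ is a continuous bijection whose inverse is $i$ (which is also continuous), using the stated facts $i(t(a,x,g))=(a,x,g)$ for $(a,x,g)\in\mathcal{T}$ and $t(i(s))=s$ for $s\in\mathbb{N}^\omega$. Hence $t{\upharpoonright_{\mathcal{T}}}$ is a homeomorphism of $\mathcal{T}$ onto $\mathbb{N}^\omega$. Next I would write $\graph(f) = P_1\setminus P_2$ where $P_1,P_2\in\mathbf{\Pi}^0_\xi(\wnseqs\times 2^\omega\times \mathbb{Z}^\omega)$, and may assume $P_2\subseteq P_1$. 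Intersecting with $\mathcal{T}$ (a closed set, hence $\mathbf{\Pi}^0_\xi$ for $\xi\ge 1$) keeps us $\mathbf{\Pi}^0_\xi$ and changes nothing since $\graph(f)\subseteq \mathcal{T}$; so I can take $P_1,P_2\subseteq\mathcal{T}$. Applying the homeomorphism, $t(P_1)$ and $t(P_2)$ are $\mathbf{\Pi}^0_\xi$ subsets of $\mathbb{N}^\omega$, and since $t{\upharpoonright_{\mathcal{T}}}$ is a bijection it commutes with set difference: $E = t(\graph(f)) = t(P_1\setminus P_2) = t(P_1)\setminus t(P_2)$.

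The remaining point is that a $\mathbf{\Pi}^0_\xi$ subset of $\mathbb{N}^\omega$ is also $\mathbf{\Pi}^0_\xi$ as a subset of $\mathbb{Z}^\omega$, so that $E$ is the difference of two $\mathbf{\Pi}^0_\xi$ subsets of the ambient space $\mathbb{Z}^\omega$ (as the claim demands). This holds because $\mathbb{N}^\omega$ is a closed subspace of $\mathbb{Z}^\omega$: if $C\in\mathbf{\Pi}^0_\xi(\mathbb{N}^\omega)$ then $C = D\cap \mathbb{N}^\omega$ for some $D\in\mathbf{\Pi}^0_\xi(\mathbb{Z}^\omega)$, and since $\mathbb{N}^\omega$ is itself closed (hence $\mathbf{\Pi}^0_\xi$) in $\mathbb{Z}^\omega$, the intersection $D\cap\mathbb{N}^\omega$ is $\mathbf{\Pi}^0_\xi$ in $\mathbb{Z}^\omega$. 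Applying this to $t(P_1)$ and $t(P_2)$ finishes the argument.

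I expect the main obstacle to be purely bookkeeping rather than conceptual: one must carefully check that the identification $T^\omega\cong\mathcal{T}$ and the elementwise action of $\theta$ and $\iota$ really do give a homeomorphism $\mathcal{T}\to\mathbb{N}^\omega$, and that restricting the $\mathbf{\Pi}^0_\xi$ witnesses $P_1,P_2$ to $\mathcal{T}$ is harmless (which is where Property (ii), forcing $\graph(f)\subseteq\mathcal{T}$, is essential). The subtlety worth stating explicitly is that images of $\mathbf{\Pi}^0_\xi$ sets are generally badly behaved, so it is crucial that $t$ is being applied as a \emph{homeomorphism on the relevant domain} $\mathcal{T}$ and not as an arbitrary continuous map; the injectivity guaranteed by $i$ is what lets the set difference pass through the image.
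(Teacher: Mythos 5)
Your proof is correct and is essentially the paper's own argument: both rest on Property (ii) forcing $\graph(f)\subseteq\mathcal{T}$, on $t{\upharpoonright_{\mathcal{T}}}$ and $i$ being mutually inverse continuous maps between the closed sets $\mathcal{T}$ and $\mathbb{N}^\omega$, and on $\mathbb{N}^\omega$ being closed in $\mathbb{Z}^\omega$. The only cosmetic difference is that the paper writes $E=i^{-1}(\graph(f))$ and uses preimages under the continuous map $i$ (so complexity preservation and compatibility with set difference are automatic), whereas you push forward through the homeomorphism $t{\upharpoonright_{\mathcal{T}}}$ and therefore verify injectivity and the restriction of the $\mathbf{\Pi}^0_\xi$ witnesses to $\mathcal{T}$ by hand --- the same facts, packaged differently.
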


\begin{proof}
If we apply first the definition of $\mathcal{T}$ and then Property (ii) of \autoref{thm:thickgraph}, then we get
\[\mathcal{T}\supseteq \{(a,x,g) \in\wnseqs\times2^\omega\times\mathbb{Z}^\omega: g\in \supp\mu_a\}\supseteq \graph(f).\]
This implies that $\graph(f)=i(t(\graph(f)))=i(E)$ and hence $E=i^{-1}(\graph(f))$. As Property (i) states that $\graph(f)\subseteq \mathcal{T}$ is the difference of two $\mathbf{\Pi}^0_\xi$ subset of $\wnseqs\times2^\omega\times\mathbb{Z}^\omega$, it is also the difference of two $\mathbf{\Pi}^0_\xi$ subsets of $\mathcal{T}$ (using that $\mathcal{T}$ is closed). This means that its preimage under the continuous function $i : \mathbb{N}^\omega\to \mathcal{T}$ is the difference of two $\mathbf{\Pi}^0_\xi$ subsets of $\mathbb{N}^\omega$. As $\mathbb{N}^\omega$ is a closed subset of $\mathbb{Z}^\omega$, this yields that $E=i^{-1}(\graph(f))$ is indeed the difference of two $\mathbf{\Pi}^0_\xi$ subsets of $\mathbb{Z}^\omega$.
\end{proof}

\begin{claim}
$E$ is Haar null.
\end{claim}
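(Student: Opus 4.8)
The plan is to invoke the characterization of Borel Haar null sets (\autoref{lem:borelsubsets}): since $E$ is the difference of two $\mathbf{\Pi}^0_\xi$ sets it is Borel, so it suffices to produce a witness sequence, i.e.\ a sequence $c\in\wnseqs$ with $c(k)\to\infty$ such that $\mu_c(E+y)=0$ for every $y\in\mathbb{Z}^\omega$. I would exploit the description $E=i^{-1}(\graph(f))$ from the previous claim: a point $s\in\mathbb{N}^\omega$ lies in $E$ exactly when its coordinatewise decoding $i(s)=(a,x,g)$ (with $\iota(s(k))=(a(k),x(k),g(k))$) satisfies $(a,x)\in\dom(f)$ and $g=f(a,x)$. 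Thus $E$ is a transversal which meets each \enquote{fibre} over a pair $(a,x)$ in the single point $t(a,x,f(a,x))$, and by Property (ii) of \autoref{thm:thickgraph} this point indeed lies in $\mathbb{N}^\omega$ (equivalently, $g\in\supp\mu_a$).

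The heart of the computation is a Fubini/disintegration argument on the product measure $\mu_c=\bigotimes_k\varrho_{c(k)}$. Fix $y$. Under $\mu_c$ the coordinates $\sigma(k)$ are independent, hence so are the decoded triples $\iota(\sigma(k)-y(k))=(a(k),x(k),g(k))$, defined whenever $\sigma(k)-y(k)\ge 0$. Membership $\sigma-y\in E$ forces $g=f(a,x)$, i.e.\ $g(k)=f(a,x)(k)$ for every $k$; conditionally on the decoded pair $(a,x)\in\dom(f)$ the coordinates $g(k)$ are independent and uniform on the set $Z_k$ of those $z\in\llbracket 0,a(k)+1\rrbracket$ whose image $\theta(a(k),x(k),z)$ lies in the range of values assumed by $\sigma(k)-y(k)$. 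Consequently $\mu_c(E+y)$ equals the integral, against the distribution of the decoded pair $(a,x)$, of $\prod_k\tfrac{1}{\#Z_k}$ (a factor being $0$ when $f(a,x)(k)\notin Z_k$), so it is enough to show that this conditional product vanishes almost everywhere.

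The main obstacle is that a single sequence $c$ must defeat every translate $y$ simultaneously, and a malicious $y$ can distort the situation in two ways: it can push most of the mass of $\varrho_{c(k)}$ into the region $\sigma(k)<y(k)$, where $\sigma-y\notin\mathbb{N}^\omega$, or it can shrink the range of each $\sigma(k)-y(k)$ to very few values. I would resolve this by a dichotomy on the lengths $L_k$ of the effective windows $\llbracket 0,c(k)\rrbracket\cap[y(k),\infty)$. The key combinatorial observation is that each sub-block $\{\theta(a(k),x(k),z):z\in\llbracket 0,a(k)+1\rrbracket\}$ is a run of consecutive integers of length $a(k)+2\ge 3$, so it can meet the window in exactly one point only if that point is a window endpoint; hence, conditioned on coordinate $k$ being decodable, the $\mu_c$-probability of $\#Z_k\ge 2$ is at least $1-2/L_k$. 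If $L_k\ge 4$ for infinitely many $k$, then the second Borel--Cantelli lemma (using independence across coordinates) gives $\#Z_k\ge 2$ infinitely often almost surely, so $\prod_k\tfrac{1}{\#Z_k}=0$ and therefore $\mu_c(E+y)=0$ (if the decodable set is $\mu_c$-null there is nothing to prove). Otherwise $L_k\le 3$ for cofinitely many $k$, and then, since $c(k)\to\infty$, the total mass $\prod_k\tfrac{L_k}{c(k)+1}$ of the decodable configurations is already $0$; as $E+y\subseteq\{\sigma:\sigma-y\in\mathbb{N}^\omega\}$, this again forces $\mu_c(E+y)=0$. Both cases yield $0$ for every $y$, so $c$ is a witness sequence and $E$ is Haar null. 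I expect this uniform control of the windows across all translates to be the delicate point; the remaining steps are routine Fubini bookkeeping.
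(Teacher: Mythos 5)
Your proof is correct, but it follows a genuinely different route from the paper's, which is far shorter. The paper shows that the \emph{constant} sequence $a_0=(1,1,\dots)$ is already a witness sequence: if $e=t(a,x,f(a,x))$ and $e'=t(a',x',f(a',x'))$ are distinct points of $E$, then at any coordinate $k$ where $(a(k),x(k))\neq(a'(k),x'(k))$ one has $|e(k)-e'(k)|\ge 2$, because the blocks of $\theta$ have length $a(k)+2$ while Property (ii) confines $f(a,x)(k)$ to $\llbracket 0,a(k)\rrbracket$, leaving a gap of size at least $2$ between admissible values in distinct blocks; hence every translate of $E$ meets $\{0,1\}^\omega=\supp\mu_{a_0}$ in at most one point and is $\mu_{a_0}$-null. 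Your argument never uses this built-in slack: it only needs that $E$ is a transversal (one point above each pair $(a,x)$) and that the blocks are intervals of length at least $2$, and in exchange it requires a witness sequence with $c(k)\to\infty$, a disintegration of $\mu_c$ over the decoded pair, and a Borel--Cantelli dichotomy on the window lengths $L_k$. What your approach buys is robustness (it shows every diverging sequence is a witness, and would survive a coding $\theta$ with no gap between blocks); what the paper's buys is brevity, the whole proof being an elementary separation estimate. One point you must make explicit when writing yours up: the second Borel--Cantelli lemma has to be applied under the measure conditioned on the full decodability event $D=\{\sigma : \sigma-y\in\mathbb{N}^\omega\}$, under which the coordinates are still independent and your bound $1-2/L_k\ge 1/2$ holds for the infinitely many relevant $k$; applied unconditionally it can fail, since the unconditional probability of \enquote{coordinate $k$ decodable and $\#Z_k\ge 2$} is at most $L_k/(c(k)+1)$, which can be summable (e.g.\ $c(k)=2^k$, $y(k)=c(k)-4$). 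Your parenthetical disposing of the case $\mu_c(D)=0$ indicates you intend the conditional reading, so the argument does go through as sketched.
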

\begin{proof}
We will show that $a_0=(1,1,\ldots)$ is a witness sequence for this fact. (By the way, the corresponding witness measure, $\mu_{a_0}$ is just the usual coin-flip measure with $\supp(\mu_{a_0})=\{0,1\}^\omega\subset \mathbb{Z}^\omega$.) It is clearly sufficient to show that $|(E+r)\cap \{0,1\}^\omega|\le 1$ for all $r\in \mathbb{Z}^\omega$. This is equivalent to saying that if $e,e'\in E$ and $e\neq e'$, then $|e(k)-e'(k)|\ge 2$ for at least one $k\in\omega$.

Fix arbitrary $e, e'\in E$ with $e\neq e'$. By the definition of $E$ there are $a, a'\in\wnseqs$ and $x, x'\in2^\omega$ such that $e=t(a,x,f(a,x))$ and $e'=t(a',x', f(a',x'))$. As we assumed that $e\neq e'$, we can find a $k\in\omega$ where $(a(k), x(k))\neq (a'(k), x'(k))$. Without loss of generality, we may assume that $(a(k), x(k))<(a'(k), x'(k))$ lexicographically. By Property (ii) of \autoref{thm:thickgraph} we know that $f(a,x)\in \supp(\mu_a)=\prod_{k\in\omega}\llbracket 0, a(k)\rrbracket$, hence $0\le (f(a,x))(k)\le a(k)$ and analogously $0\le (f(a',x')(k))\le a'(k)$. Straightforward and elementary calculations (using these bounds and the definition of $t$ and $\theta$) show that $e(k)+2\le e'(k)$ both in the case when $a(k)<a'(k)$ and in the case when $a(k)=a'(k)$ (and hence $x(k)<x'(k)$, i.e.\ $x(k)=0$ and $x'(k)=1$). These allow us to conclude that $E$ is indeed Haar null.
\end{proof}

\begin{claim}
There is no Haar null set $H\in \mathbf{\Pi}^0_\xi(\mathbb{Z}^\omega)$ containing $E$.
\end{claim}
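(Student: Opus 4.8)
The plan is to argue by contradiction and reduce the existence of a $\mathbf{\Pi}^0_\xi$ Haar null hull of $E$ to a violation of Property (iii) of \autoref{thm:thickgraph}. So suppose that $H \in \mathbf{\Pi}^0_\xi(\mathbb{Z}^\omega)$ is Haar null and $E \subseteq H$. Since $H$ is Borel and Haar null, \autoref{lem:borelsubsets} supplies a witness sequence $a \in \wnseqs$, i.e.\ $\mu_a(H + r) = 0$ for every $r \in \mathbb{Z}^\omega$. This particular $a$ is the sequence to which I will apply Property (iii).

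For this fixed $a$, consider the map $t_a : 2^\omega \times \mathbb{Z}^\omega \to \mathbb{Z}^\omega$, $t_a(x, g) = t(a, x, g)$, which is continuous because it is the composition of the continuous inclusion $(x,g)\mapsto(a,x,g)$ with the continuous map $t$. Set $S = t_a^{-1}(H)$. As $H$ is $\mathbf{\Pi}^0_\xi$ and continuous preimages preserve the classes $\mathbf{\Pi}^0_\xi$, we have $S \in \mathbf{\Pi}^0_\xi(2^\omega \times \mathbb{Z}^\omega)$. Next I would check that $\graph(f_a) \subseteq S$: for every $(a, x) \in \dom(f)$ the point $(x, f(a,x))$ lies in $\graph(f_a)$, and $t_a(x, f(a,x)) = t(a, x, f(a,x)) \in E \subseteq H$, so $(x, f(a,x)) \in S$ as required.

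Now Property (iii) of \autoref{thm:thickgraph}, applied to this $a$ and to $S$, yields some $x \in 2^\omega$ with $\mu_a(S_x) > 0$. The key computation is to identify this fiber using the translation property $t(a, x, g) = t(a, x, 0) + g$:
\[ S_x = \{ g \in \mathbb{Z}^\omega : t(a, x, g) \in H \} = \{ g \in \mathbb{Z}^\omega : g + t(a, x, 0) \in H \} = H - t(a, x, 0), \]
so that $\mu_a(S_x) = \mu_a\big(H - t(a, x, 0)\big) > 0$. But taking $r = -t(a, x, 0)$ in the witness property of $a$ gives $\mu_a\big(H - t(a,x,0)\big) = 0$, a contradiction.

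I expect the substantive content to be almost entirely front-loaded into the earlier construction: the elaborate definitions of $\theta$ and $t$ exist precisely so that $g \mapsto t(a, x, g)$ is a translation for each fixed pair $(a, x)$, which is exactly what turns the fiber $S_x$ into a genuine translate of $H$ and lets the witness-sequence property from \autoref{lem:borelsubsets} collide with Property (iii). Thus the only points that genuinely need care are that the witness sequence $a$ furnished for $H$ is a legitimate input to the universally quantified Property (iii) — which it is, since that property ranges over all $a \in \wnseqs$ — and that the preimage $S = t_a^{-1}(H)$ really does have complexity $\mathbf{\Pi}^0_\xi$. Neither is hard, so I anticipate no serious obstacle beyond correctly tracking the translation in the fiber computation.
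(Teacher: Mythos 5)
Your proposal is correct and follows the paper's own argument essentially verbatim: extract a witness sequence $a$ via \autoref{lem:borelsubsets}, pull $H$ back along the continuous section map $t_a$ to get the $\mathbf{\Pi}^0_\xi$ set $S$ containing $\graph(f_a)$, invoke Property (iii), and use that $g\mapsto t(a,x,g)$ is a translation to contradict the witness property. Your explicit computation $S_x = H - t(a,x,0)$ is just a slightly more detailed rendering of the paper's phrasing that ``a translate of $H$ contains $S_{x^*}$.''
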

\begin{proof}
Suppose that $H \in \mathbf{\Pi}^0_\xi$ is such a set. By \autoref{lem:borelsubsets} there exists a witness sequence $a\in \mathbb{Z}_+^\omega$ such that $\mu_a(H+r)=0$ for every $r\in\mathbb{Z}^\omega$. As $t$ is continuous, the section map \[t_a : 2^\omega\times\mathbb{Z}^\omega\to \mathbb{Z}^\omega\,,\quad(x,g)\mapsto t(a, x,g)\] is also continuous, hence $S=t_a^{-1}(H)\subseteq 2^\omega\times \mathbb{Z}^\omega$ is a $\mathbf{\Pi}^0_\xi$ set.

It is easy to check that $\graph(f_a)\subseteq S$, and therefore by Property (iii) of \autoref{thm:thickgraph} there exists an $x^*\in 2^\omega$ such that $\mu_a(S_{x^*})>0$. By the definition of $S$, $t(a, x^*, S_{x^*})\subseteq t_a(S)\subseteq H$. But $g\mapsto t(a,x^*, g)$ is a translation, so a translate of $H$ contains $S_{x^*}$, but $S_{x^*}$ has positive $\mu_a$-measure and hence this contradicts that $a$ is a witness sequence for $H$.
\end{proof}
This concludes the proof of our main theorem.
\end{proof}

\section{Related results and questions}\label{sec:last}

  
\autoref{thm:main} implies that if $2\le\xi<\omega_1$, then there is a $\mathbf{\Delta}^0_{\xi+1}$ Haar null subset of $\mathbb{Z}^\omega$ that is not contained in any $\mathbf{\Pi}^0_\xi$ Haar null set. This means that the possible answers for \autoref{que:ev2} are narrowed down to two classes of the Borel hierarchy: the least possible complexitiy class of a Haar null set that is not contained in a $\mathbf{\Pi}^0_\xi$ Haar null set must be either $\mathbf{\Sigma}^0_\xi$ or $\mathbf{\Delta}^0_{\xi+1}$. (Note that our result showed more than this: even if we subdivide $\mathbf{\Delta}^0_{\xi+1}$ into the classes of the difference hierarchy, our example is in one of the lowest classes.)

Therefore in $\mathbb{Z}^\omega$ essentially only the following question is left open:
\begin{question}
For a given $2\le \xi<\omega_1$, is there a $\mathbf{\Sigma}^0_\xi$ Haar null set in $\mathbb{Z}^\omega$ that cannot be covered by a $\mathbf{\Pi}^0_\xi$ Haar null set?
\end{question}

The cases of other non-locally-compact abelian Polish groups are also left open. As $\mathbb{Z}^\omega$ is among the \enquote{nicest} non-locally-compact Polish groups, it is plausible that the answer will be similar in those other groups.

While the questions of Elekes and Vidny\'anszky are only about the abelian case, it is natural to generalize them for arbitrary Polish groups:

\begin{question}\label{que:general}
For a given Polish group $G$ and $2\le \xi<\omega_1$, what is the least complexity of a Haar null set in $G$ that cannot be covered by a $\mathbf{\Pi}^0_\xi$ Haar null set?
\end{question}

The paper \cite{Ba} examines this generalized question and proves the following result, answering \autoref{que:general} for a particular (non-abelian, Polish) group $H$ and $\xi=2$:
\begin{theorem}[Banakh]
There exists a Polish meta-abelian group $H$ containing a subgroup $F\subset H$ such that $F$ is a $F_\sigma$ Haar null set in $H$ but every $G_\delta$ set $G\subset H$ containing $B$ is thick and hence is not Haar null in $H$.
\end{theorem}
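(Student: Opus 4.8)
The plan is to build $H$ as a semidirect product $H = N \rtimes Q$ of two abelian Polish groups, so that $H$ is automatically meta-abelian: $N$ is an abelian normal subgroup and the quotient $H/N \cong Q$ is abelian, whence $[H,H]\subseteq N$ is abelian. To keep $H$ non-locally-compact — which is essential, since in a locally compact group regularity of the Haar measure forces every Haar null set to have a $G_\delta$ hull of measure zero, as recalled in the introduction — I would take at least one factor non-locally-compact, for instance $N=\mathbb{Z}^\omega$ together with a countable abelian group $Q$ acting continuously by coordinate permutations or shifts. The subgroup $F$ should be an \emph{uncountable} $F_\sigma$ subgroup, since a countable $F$ is far too thin to force its hulls to be large; I would take $F$ of the form $N_0 \rtimes Q$, where $N_0 \le N$ is a carefully chosen dense $F_\sigma$ subgroup that is Haar null in $N$ and whose $Q$-translates sweep across a dense family of directions in $N$.

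First I would dispatch the two easy requirements. That $F$ is $F_\sigma$ reduces to choosing $N_0$ as a countable increasing union of closed pieces. That $F$ is Haar null I would establish by exhibiting an explicit witness measure transverse to $F$, in direct analogy with the product measures $\mu_a$ used in \autoref{lem:borelsubsets}: a Borel probability measure supported on a compact set meeting each translate of $F$ in a null set. For the implication ``thick $\Rightarrow$ not Haar null'' I would invoke the standard fact that every Haar null set admits a witness measure with compact support (\cite[Theorem 4.1.4]{EN}): if a set contained a translate of the compact support of such a measure, that translate would carry positive measure, contradicting that the measure witnesses nullity. Hence it suffices to arrange that every $G_\delta$ hull $G \supseteq F$ contains a translate of every compact subset of $H$.

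The heart of the argument, and the step I expect to be the main obstacle, is exactly this thickness of the hulls. Since $F$ is dense, any $G_\delta$ set $G \supseteq F$ is a dense $G_\delta$, hence comeager, and a Baire-category argument yields the \emph{finite} version for free: for every finite $K \subseteq H$ the set $\bigcap_{k \in K} k^{-1}G$ is a finite intersection of comeager sets, so it is nonempty and some translate $Kg$ lies in $G$. The difficulty is upgrading this from finite to \emph{compact} $K$, because an intersection $\bigcap_{k \in K} k^{-1}G$ over an uncountable compact $K$ need not be comeager, and a dense-in-$K$ partial translate landing inside the merely $G_\delta$ set $G$ does not close up to all of $Kg$. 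This is precisely where the non-commutativity of $H$ must be exploited: I would use the conjugation action of $Q$ on $N$ to show that $F$ itself already contains, along a dense skeleton of directions, entire cosets of a fixed compact subgroup, so that once a dense partial translate is absorbed into $F \subseteq G$ the compact completion is forced by the coset structure of $F$ rather than by any closedness of $G$. Making the $Q$-action mixing enough to guarantee this, while simultaneously keeping $N_0$ (and hence $F$) Haar null, is the delicate balance the construction must strike; it is also what explains why a genuinely non-abelian $H$ is required here, in contrast to the abelian situation of \autoref{thm:main}.
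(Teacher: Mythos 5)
You should first be aware that the paper you are working against contains no proof of this statement at all: it is quoted (with a typo, the ``$B$'' should be ``$F$'') from Banakh's preprint \cite{Ba} as background for \autoref{que:general}, so your attempt can only be compared with Banakh's own argument. Your overall skeleton does match that argument in outline, and is essentially forced by the statement: take $H=N\rtimes Q$ with $N$ and $Q$ abelian Polish (this is exactly what meta-abelian allows), take for $F$ a dense, uncountable $F_\sigma$ subgroup, witness its Haar nullity by an explicit compactly supported measure, and note that a thick set (one containing a two-sided translate $xKy$ of every compact $K\subseteq H$) cannot be Haar null, since a witness measure may be assumed to have compact support \cite[Theorem 4.1.4]{EN}. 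These parts are correct but routine. (One side remark: your claim that a countable $F$ is ``far too thin'' to work is not something you can assert --- whether a countable set can fail to have a $G_\delta$ Haar null hull is precisely the open \autoref{que:banakh}; your choice of an uncountable $F$ agrees with \cite{Ba}, but for an unjustified reason.)

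The genuine gap is the step you yourself flag as the main obstacle, and the mechanism you sketch for it cannot work. First, in your candidate group $H=\mathbb{Z}^\omega\rtimes Q$ with $Q$ countable discrete, every compact subgroup is finite: a compact subgroup meets $\mathbb{Z}^\omega$ trivially (its coordinate projections are compact, hence trivial, subgroups of $\mathbb{Z}$) and injects into a finite subgroup of the discrete group $Q$. So ``entire cosets of a fixed compact subgroup'' inside $F$ carry no content in this group. Second, even granting some nontrivial compact subgroup $C$, thickness quantifies over \emph{all} compact sets --- e.g.\ the sets $\prod_{n\in\omega}\llbracket 0,a(n)\rrbracket$ for arbitrary $a\in\mathbb{Z}_+^\omega$ --- and cosets of one fixed $C$ cannot absorb these. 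Third, and most fundamentally, your plan pushes the compactness-absorption into $F$ itself, which is self-defeating: if for every compact $K$ a translate of $K$ were ``completed'' inside $F$, then $F$ would be thick and hence, by your own earlier step, not Haar null. Any correct proof must instead extract the translate of $K$ from the interplay between compactness of $K$ and the $G_\delta$ structure of the hull $G=\bigcap_n U_n$: for instance, by a stage-by-stage fusion in which one uses that a compact set contained in the open set $U_n$ has a whole neighbourhood inside $U_n$, and uses the density of the subgroup $F$ together with a suitably expansive conjugation action of $Q$ on $N$ to pass to stage $n+1$ without leaving $U_0,\dots,U_n$, the limit of the approximations giving $xKy\subseteq G$. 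This is the shape of the argument in \cite{Ba}, and it is exactly what is missing from your proposal: as written, you have only established the easy finite version of thickness, which holds for any comeager set and does not use the group structure at all.
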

(A topological group $H$ is called \emph{meta-abelian} if it contains a closed normal abelian subgroup $A\subset H$ such that the quotient group $H/A$ is abelian.)

This paper also asks the following question:
\begin{question}[Banakh]\label{que:banakh}
Is each countable subset of an uncountable Polish group $G$ contained in a $G_\delta$ Haar null subset of $G$?
\end{question}
The following well-known and seemingly unrelated problem was asked in \cite{DaOP}:
\begin{question}[Darji]\label{que:darji}
Can every uncountable Polish group be written as the union of two sets, one meager and the other Haar null?
\end{question}

However, it is not very hard to prove that these questions are equivalent:
\begin{fact}[Elekes-Nagy]\label{fact:equivques}
If $G$ is an uncountable Polish group, then the following are equivalent:
\begin{enumerate}[label=(\roman*)]
\item Each countable subset of $G$ is contained in a $G_\delta$ Haar null subset of $G$.
\item $G$ can be written as the union of two sets, one meager and the other Haar null.
\end{enumerate}
\end{fact}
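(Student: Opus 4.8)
The plan is to prove the two implications separately; the implication (i) $\Rightarrow$ (ii) is the easy one, and the implication (ii) $\Rightarrow$ (i) carries the real content.

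For (i) $\Rightarrow$ (ii) I would start from a countable dense subset $D\subseteq G$, which exists because $G$ is Polish, hence separable. Applying (i) to $D$ yields a $G_\delta$ Haar null set $N\supseteq D$. Since $N$ contains a dense set it is itself dense, and a dense $G_\delta$ in a Polish (hence Baire) space is comeager; thus its complement $M=G\setminus N$ is meager. The decomposition $G=M\cup N$ is then the desired one.

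For (ii) $\Rightarrow$ (i), fix a decomposition $G=M\cup N$ with $M$ meager and $N$ Haar null, and let $C=\{c_i : i\in\omega\}\subseteq G$ be an arbitrary countable set. Since $M$ is meager I would enclose it in an $F_\sigma$ meager set $F=\bigcup_n F_n$ with each $F_n$ closed nowhere dense, and set $V=G\setminus F$. Then $V$ is $G_\delta$, and since $V\subseteq G\setminus M\subseteq N$ it is Haar null (subsets of Haar null sets are Haar null, as these form a $\sigma$-ideal). Thus $V$ is already a $G_\delta$ Haar null set; the only defect is that it need not contain $C$. The key step is to cover $C$ by a \emph{single} translate of $V$. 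I claim there is a $g\in G$ with $C\subseteq gV$. Indeed, $c_i\in gV$ is equivalent to $g^{-1}c_i\notin F$, i.e.\ to $g\notin c_iF^{-1}$. Since inversion and left translation are homeomorphisms of $G$, each $c_iF^{-1}=\bigcup_n c_iF_n^{-1}$ is $F_\sigma$ and meager, hence $\bigcup_{i\in\omega}c_iF^{-1}$ is meager; by the Baire category theorem its complement is nonempty, so a suitable $g$ exists. For this $g$ the set $gV$ is $G_\delta$ (being the image of a $G_\delta$ under the homeomorphism $x\mapsto gx$) and Haar null (by translation invariance of the ideal), and it contains $C$, completing the proof.

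The only genuine obstacle is preserving the $G_\delta$ property while arranging containment of $C$: the naive idea of taking one translate of $V$ through each $c_i$ produces a countable union of $G_\delta$ sets, which need not be $G_\delta$. The resolution above — realizing that Baire category lets a single translate swallow the whole countable set at once — is the crux. The remaining ingredients (separability in the first direction, translation invariance of the Haar null ideal, and inversion being a homeomorphism so that $F^{-1}$ stays meager even in the non-abelian setting) are routine.
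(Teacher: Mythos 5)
Your proof is correct and follows essentially the same route as the argument the paper relies on (the paper itself defers the proof of this fact to \cite[subsection 5.4]{EN}): covering a countable dense set by a $G_\delta$ Haar null set gives (i)~$\Rightarrow$~(ii), and for (ii)~$\Rightarrow$~(i) one passes to the $G_\delta$ complement of a meager $F_\sigma$ hull of the meager piece and uses the Baire category theorem to find a single translate of it containing the given countable set. Both the decomposition and the key Baire-category step match the standard argument, so there is nothing to correct.
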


Several papers study \autoref{que:darji} and give affirmative answers in various groups or classes of groups, hence yielding answers for \autoref{que:banakh}. For the proof of \autoref{fact:equivques} and a list of groups where \autoref{que:darji} is answered, see \cite[subsection 5.4]{EN}.

Another related problem concerns the \emph{Haar meager sets} introduced by Darji in \cite{Da}:

\begin{definition}
If $G$ is a Polish group, a set $A\subseteq G$ is called \emph{Haar meager} if there exist a Borel set $B\supseteq A$, a compact metric space $K$ and a continuous function $f: K\to G$ such that $f^{-1}(gBh)$ is meager in $K$ for every $g, h\in G$. If here $K\subseteq G$ and $f$ is the identity function on $K$, then we say that $A$ is \emph{strongly Haar meager}.
\end{definition}

The Haar meager sets form a translation-invariant $\sigma$-ideal and coincide with meager sets in locally compact Polish groups; for more information about them, see \cite{Da}, \cite{DRVV} or the survey paper \cite{EN}.

It was proved in \cite{DV} that the analog of \autoref{the:ev} holds for Haar meager sets:
\begin{theorem}(Dole\v{z}al-Vlas\'ak)
Let $G$ be a non-locally compact abelian Polish group and $1\le \xi\le\omega_1$. Then there is a strongly Haar meager Borel set without any Haar meager hull in $\mathbf{\Sigma}^0_\xi$.
\end{theorem}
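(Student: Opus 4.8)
The plan is to run the three-part strategy of this paper with measure systematically replaced by Baire category and the class $\mathbf{\Pi}^0_\xi$ replaced by $\mathbf{\Sigma}^0_\xi$, which is the natural hull class for category since every meager set has a meager $F_\sigma=\mathbf{\Sigma}^0_2$ hull. I would carry this out first in $\mathbb{Z}^\omega$, where the tools developed here are available, and treat a general non-locally-compact abelian $G$ separately (see the last paragraph).

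First I would establish the category analog of \autoref{lem:borelsubsets}: a Borel set $H\subseteq\mathbb{Z}^\omega$ is Haar meager if and only if there is a sequence $a\in\wnseqs$ such that $(H+x)\cap\supp\mu_a$ is meager in $\supp\mu_a$ for every $x\in\mathbb{Z}^\omega$, and in that case $H$ is automatically \emph{strongly} Haar meager, witnessed by the compact set $\supp\mu_a$. The nontrivial direction replaces the convolution computation by the Kuratowski--Ulam theorem. Given a Haar meager witness $f\colon K\to\mathbb{Z}^\omega$, after a translation assume $f(K)\subseteq\prod_n\llbracket -M(n),0\rrbracket$ and put $N(n)=M(n)+1$, $a(n)=N(n)-M(n)=1$; any $N(n)>M(n)$ works and, unlike the measure case, no convergence condition on $\prod_n(1-M(n)/(N(n)+1))$ is needed. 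Fix $x$ and set $W=\{(k,j)\in K\times\supp\mu_N : f(k)+j\in H+x\}$. Each vertical section $W^j=f^{-1}(H+x-j)$ is meager in $K$, so $W$ is meager, and Kuratowski--Ulam yields a horizontal section $W_{k_0}=\{j\in\supp\mu_N : f(k_0)+j\in H+x\}$ that is meager in $\supp\mu_N$. Since $f(k_0)(n)\in\llbracket -M(n),0\rrbracket$, the box $\supp\mu_a-f(k_0)$ is a clopen subset of $\supp\mu_N$ and $j\mapsto f(k_0)+j$ is a homeomorphism of it onto $\supp\mu_a$ carrying $W_{k_0}$ onto $(H+x)\cap\supp\mu_a$, which is therefore meager in $\supp\mu_a$.

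Second I would prove the category analog of \autoref{thm:thickgraph}: for $2\le\xi<\omega_1$ a partial $f\colon\wnseqs\times2^\omega\rightarrowtail\mathbb{Z}^\omega$ with $\graph(f)$ of low Borel complexity, with $f(a,x)\in\supp\mu_a$, and with the thickness property that for every $a$ and every $S\in\mathbf{\Sigma}^0_\xi(2^\omega\times\mathbb{Z}^\omega)$ containing $\graph(f_a)$ there is an $x$ with $S_x\cap\supp\mu_a$ non-meager in $\supp\mu_a$. The construction copies the one here — take $U$ universal for $\mathbf{\Sigma}^0_\xi(2^\omega\times\mathbb{Z}^\omega)$, restrict to the diagonal to get $U'$, intersect with the support condition $\{g\in\supp\mu_a\}$, and uniformize — but instead of Holický's large-measure uniformization one applies its category counterpart, producing a $\mathbf{\Sigma}^0_\xi$-measurable selection of a $\mathbf{\Sigma}^0_\xi$ set whose domain is exactly $\{p:A_p\text{ non-meager}\}$ and whose graph stays low in the hierarchy. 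Thickness then follows by the same diagonalization: if $\graph(f_a)\subseteq S$ with all $S_x\cap\supp\mu_a$ meager, the complement $B$ has comeager, hence non-meager, sections, so the universal index $x^*$ with $U_{x^*}=B$ puts $(a,x^*)$ in the domain and forces $f(a,x^*)$ into both $B_{x^*}$ and its complement. Finally I would assemble $E=t(\graph(f))$ exactly as in \autoref{thm:main}: the bookkeeping through $i,t$ is unchanged, so $E$ is Borel; the separation lemma applies verbatim, giving $|(E+r)\cap\{0,1\}^\omega|\le1$, hence meagerness in $\{0,1\}^\omega=\supp\mu_{a_0}$ and strong Haar meagerness with witness $a_0=(1,1,\dots)$; and if $H\in\mathbf{\Sigma}^0_\xi$ were a Haar meager hull, the characterization gives a witness $a$, the pullback $S=t_a^{-1}(H)\in\mathbf{\Sigma}^0_\xi$ contains $\graph(f_a)$, thickness yields $x^*$ with $S_{x^*}\cap\supp\mu_a$ non-meager, and since $g\mapsto t(a,x^*,g)$ is a translation this says a translate of $H$ meets $\supp\mu_a$ non-meagerly, contradicting the characterization.

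The main obstacle is the category uniformization in the second step: one must verify a Holický-type theorem with \emph{non-meager} sections in place of positive-measure sections, with the same control on the Borel class of the domain and the graph, using the category form of hypothesis (b) (there, $\{p:\mu_a([s])>r\}$ is open, which must be replaced by a statement about the topological size of sections depending suitably on $a$). A secondary issue is generality: the argument above is tailored to $\mathbb{Z}^\omega$, so obtaining the theorem for an arbitrary non-locally-compact abelian Polish group requires either a reduction to the $\mathbb{Z}^\omega$ case or a direct category version of the Elekes--Vidnyánszky construction, and I would settle $\mathbb{Z}^\omega$ cleanly before adapting.
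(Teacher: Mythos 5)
This statement is not proved in the paper at all: it is quoted from the Dole\v{z}al--Vlas\'ak paper \cite{DV} as a related result, so your proposal can only be judged on its own merits, and it has a fatal gap at its foundation. The decisive error is in your first step, the category analogue of \autoref{lem:borelsubsets}. You claim that the box $\supp\mu_a-f(k_0)$ is a \emph{clopen} subset of $\supp\mu_N$. It is not: it equals $\prod_{n}\bigl(\llbracket 0,a(n)\rrbracket-f(k_0)(n)\bigr)$, which is a proper subset of $\llbracket 0,N(n)\rrbracket$ in every coordinate $n$ with $M(n)\ge 1$, and a product that is proper in infinitely many coordinates is closed with empty interior, i.e.\ nowhere dense in $\supp\mu_N$. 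Hence the meagerness of $W_{k_0}$ in $\supp\mu_N$ gives no information about its trace on that box --- $W_{k_0}$ could contain the entire box and still be meager --- and your proof of the characterization collapses. This is not a repairable slip but exactly the point where measure and category are disanalogous: in the proof of \autoref{lem:borelsubsets} the central box has positive $\mu_N$-proportion precisely because $N$ is chosen so that $\prod_{n}\bigl(1-\frac{M(n)}{N(n)+1}\bigr)>0$ (the condition you declare unnecessary), whereas in category \emph{no} choice of $N$ makes a sub-box that is proper in infinitely many coordinates non-meager in the ambient box. Whether the box characterization of Haar meager subsets of $\mathbb{Z}^\omega$ is even true is unclear; transferring a category witness from an arbitrary compact set to a product set is known to be delicate, and it is the heart of what \cite{DV} had to do.

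There are further genuine gaps even granting step 1. First, a class mismatch in the diagonalization: since your hull class is $\mathbf{\Sigma}^0_\xi$, the complement $B$ of a candidate hull pullback $S$ is $\mathbf{\Pi}^0_\xi$, so a set $U$ universal for $\mathbf{\Sigma}^0_\xi$ need not have any index $x^*$ with $U_{x^*}=B$; you must instead use a universal $\mathbf{\Pi}^0_\xi$ set, which makes the set to be uniformized $\mathbf{\Pi}^0_\xi$ and changes the complexity bookkeeping (harmless for the bare conclusion \enquote{Borel}, but your sketch as written fails). Second, the uniformization you need is not for sections non-meager in $\mathbb{Z}^\omega$: every subset of the compact set $\supp\mu_a$ is nowhere dense in $\mathbb{Z}^\omega$, so that notion would make the domain $P$ empty. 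You need largeness to mean \enquote{non-meager relative to $\supp\mu_a$}, a $\sigma$-ideal varying with the point $(a,x)$, i.e.\ the category counterpart of Holick\'y's varying measures $\mu((a,x),\cdot)$; verifying that such a Holick\'y-type theorem exists, with control on the Borel class of domain and graph, is the core of your step 2 and is left open. Third, scope: the theorem concerns every non-locally-compact abelian Polish group and every $1\le\xi\le\omega_1$, while your plan at best yields $\mathbb{Z}^\omega$ and $2\le\xi<\omega_1$; in particular the endpoint $\xi=\omega_1$ concerns Borel hulls and cannot be reached by any fixed-level thick-graph construction, requiring instead an unbounded-complexity construction in the spirit of Elekes--Vidny\'anszky.
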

(Note that the measure $\leftrightarrow$ category duality usually involves a $\mathbf{\Sigma}\leftrightarrow \mathbf{\Pi}$ swap; therefore it is not surprising that this theorem considers $\mathbf{\Sigma}^0_\xi$ hulls instead of $\mathbf{\Pi}^0_\xi$ hulls.)

It is natural to ask whether our results have an analog for Haar meager sets:
\begin{question}
For a given Polish group $G$ and $1\le \xi<\omega_1$, what is the least complexity of a Haar meager set in $G$ that cannot be covered by a $\mathbf{\Sigma}^0_\xi$ Haar meager set?
\end{question}

\end{document}